\newtheorem{thm}{Theorem}[section] \newtheorem{pro}[thm]{Proposition}
\newtheorem{lemma}[thm]{Lemma}
\newtheorem{cor}[thm]{Corollary}
\numberwithin{equation}{section}
\theoremstyle{remark}
\theoremstyle{definition} 
\newtheorem{rmk}[thm]{Remark} \newtheorem*{clm}{Claim}
\newtheorem{remark}[thm]{Remark}
\newtheorem{definition}[thm]{Definition}
\DeclareMathAlphabet{\mathpzc}{OT1}{pzc}{m}{it}
 \DeclareMathOperator*{\QF}{QF}
\DeclareMathOperator*{\Gal}{Gal} 
\DeclareMathOperator*{\degree}{deg}
\newcommand{\RR}{\mathbb{R}} 
\newcommand{\ZZ}{\mathbb{Z}} 
\newcommand{\PP}{\mathbb{P}} \newcommand{\FF}{\mathbb{F}}
\newcommand{\cO}{\mathcal{O}}
\newcommand{\Z}{\mathbb{Z}}
\begin{document}
\title{On the compositum of wildly ramified extensions} 
 \author{ Manish Kumar
  }
 \address{ 
Statistics and Mathematics Unit\\
Indian Statistical Institute, \\
Bangalore, India-560059
  }
  \email{manish@isibang.ac.in}
 \begin{abstract}
  We compute the ramification filtration on wildly ramified $p^2$-cyclic extensions of local fields of characteristic $p$. The ramification filtration on the compositum of two $p$-cyclic and $p^2$-cyclic extensions are also computed. As an application, some partial results towards Abhyankar's Inertia conjecture has been proved. 
 \end{abstract}
 \keywords{wild ramification, Galois theory, curve covers}
 \subjclass[2010]{11S15, 14H30, 14G17}
\maketitle

\section{Introduction}
 The higher ramification filtration of a wildly ramified extension contains vital information about the extension. For instance the degree of the different is encoded in this data (Hilbert's different formula). This in turn helps in computing genus of a (wildly ramified) cover of a given curve. But computing these filtrations is a difficult task even for cyclic Artin-Schrier-Witt extensions. For a cyclic extension of degree $p$, the computation is not hard and can be found at many places (for instance see \cite[IV, 2, Exercise 5]{Serre-local.fields}). In this manuscript, we compute the ramification filtration for $p^2$-cyclic extension (Lemma \ref{p^2-cyclic-filtration}). 
 We also compute the ramification filtration for the compositum of two $p$-cyclic and $p^2$-cyclic extensions in a large number of cases (Proposition \ref{p-cyclic-compositum}, \ref{filtration.linearly.disjoint}, \ref{filtration.not.linearly.disjoint}). The case of $p^3$-cyclic extension already runs into computationally intractable problem. These computations involve working with the operations in the Witt ring.
 
 For an algebraically closed field $k$ of characteristic $p$ in \cite{Harbater-moduli.of.pcovers}, Harbater showed that every Galois cover of the local field $k((x))$ with Galois group a $p$-group $P$ can be extended to a $P$-cover of $\PP^1_k$ branched only at $x=0$. This was extended by Katz to all finite Galois covers of $k((x))$ in \cite{Katz-Gabber}. A local $p^2$-cyclic extension of $k((x))$ is given by a Witt vector $(a_0,a_1)\in W_2(k((x)))$ of length two. And a $(\Z/p\Z)^2$-extension of $k((x))$ is given by the compositum of two distinct Artin-Schrier extension of $k((x))$ corresponding to elements $b_1,b_2\in k((x))$. As an application of our local calculations, we compute the genus of any Harbater-Katz-Gabber cover of $\PP^1$ associated to Galois extensions of degree $p^2$ in terms of valuations of $a_1,a_2,b_1,b_2$ (Theorem \ref{genus-p^2-noncyclic-covers}, \ref{genus-p^2-cyclic-covers}).  
 
 Let $G$ be a quasi-$p$ group, i.e. a group generated by its Sylow-$p$ subgroup, and $I\le G$ be such that $I=\Z/n\Z \rtimes P$ where $P$ is a $p$-group whose conjugates generate $G$ and $(n,p)=1$. Then Abhyankar's Inertia conjecture asserts that there exist a $G$-Galois cover $X\to \PP^1_k$ branched only at $\infty$ such that the inertia group at a point of $X$ lying above $\infty$ is $I$. It is easy to see that inertia group at any ramified point of a Galois cover of $\PP^1_k$ branched only at $\infty$ have the above mentioned property. For a pair $(G,I)$ as above, we will say $(G,I)$ is realizable if Abhyankar's Inertia conjecture is true for $(G,I)$. This conjecture is largely open though there are some results in support of the conjecture. For instance Harbater in \cite{Harbater-moduli.of.pcovers} showed that if $(G,P)$ is realizable for a $p$-subgroup $P$ of $G$ and $Q$ is a $p$-subgroup of $G$ containing $P$ then $(G,Q)$ is realizable. There are some results for specific Galois groups, for instance, see \cite{BP}, \cite{MP} and \cite{Obus}.
 
 As an application to the computation of the ramification filtration of the compositum of local extensions, it is shown in Theorem \ref{p-cyclic.main} that if $(G,I)$ is realizable and $P$ is a $p$-subgroup of $G$ containing $I_2$ then $(G\times \Z/p\Z,Q)$ is also realizable for some index $p$-subgroup $Q$ of $P\times \Z/p\Z$ under the assumption that there is no epimorphism from $G$ to $\Z/p\Z$. Here $I_2$ appears in the lower numbering ramification filtration of $I$.
 Moreover if $(G,I)$ is realizable and $P$ is any $p$-subgroup of $G$ containing $I_2$ with an epimorphism $a:P\to \Z/p^2\Z$  then $(G\times\Z/p^2\Z, Q)$ is realizable where $Q$ is an index $p$ subgroup of $P\times_{\Z/p\Z}\Z/p^2\Z$ (Theorem \ref{p^2-cyclic.main}). One of the ingredient in the proof is the main result of \cite{wild.ram}.

\subsection*{Acknowledgments}
 A part of this work was done while the author was at Universit\"at Duisburg-Essen, where he was supported by SFB/TR-45 grant.

\section{Ramification filtration and Artin-Schrier-Witt theory}

 Let $L/K$ be a Galois extension of local fields with Galois group $G$. Let $v_K$ and $v_L$  denote the valuation associated to $K$ and $L$ respectively, with the value group $\ZZ$. Let us define a decreasing filtration on $G$ by 
 $$G_i=\{\sigma\in G: v_S(\sigma x -x)\ge i+1, \, \forall x\in S\}$$
 Note that $G_{-1}=G$ and $G_0$ is the inertia subgroup. This filtration is called the lower (numbering) ramification filtration. For every $i$, $G_i$ is a normal subgroup of $G$. One extends this filtration to the real line as follows: for $u\in \RR, u\ge -1$, let $m$ be the smallest integer such that $m\ge u$ then $G_u=G_m$. Then the upper (numbering) ramification filtration on $G$ is defined as $G^v=G_{\psi(v)}$ where $\psi$ is the inverse of the Herbrand function $\phi$ given by
 $$\phi(v)=\int_{0}{^v}\frac{du}{[G:G_u]}$$
 Note that $\phi$ is bijective piece-wise linear function. Let $G^{v+}=\cup_{\epsilon>0} G^{v+\epsilon}$.

 A number $u\ge 0$ (respectively $l \ge 0$) is called an upper jump (respectively lower jump) of the ramification filtration of $G$ if $G^u\ne G^{u+}$ (respectively $G_l\ne G_{l+}$. Let $u_1,\ldots, u_r$ be the upper jumps, $l_1,\ldots, l_r$ be the lower jumps and $s_i=[G:G^{u_i}]=[G:G_{l_i}]$. Note that $G_0$ is a $p$-group iff $s_1=1$, i.e. $L/K$ is purely wildly ramified. 
 \begin{remark}\label{upper-lower-conversion}
  A straight forward computation shows that if $G_0$ is a $p$-group and if we set $l_0=u_0=0$ then for $i\ge 1$, $$u_i=\sum_{j=1}^i\frac{l_j-l_{j-1}}{s_j}\text{\ \ \ \ \  and \ \ \ \ \ }l_i=\sum_{j=1}^i(u_j-u_{j-1})s_j$$.
 \end{remark}

 \begin{rmk}
  Note that $G_i=G$ for some $i\ge 0$ iff $G^i=G$. Since $\phi(v)\le v$, $G_v=G^{\phi(v)}\supset G^v$. This explains the ``if part''. The ``only if'' is true because for $v\le i$, $\phi(v)=v$ and hence $\psi(v)=v$.
 \end{rmk}

 \begin{lemma}\label{upperjumpslift}
  Let $L/K$ be a finite Galois extension of local fields with Galois group $G$ and $H$ be a normal subgroup of $G$. 
  \begin{enumerate}
   \item Let $u_1,\ldots, u_r$ be the upper jumps of $G/H$ then $u_1,\ldots, u_r$ are also upper jumps of $G$.
   \item If $H=G_i$ for some $i$, and $l_1,\ldots,l_r$ are the lower jumps of $H$ then $l_1,\ldots,l_r$ are also lower jumps of $G$.
  \end{enumerate}
 \end{lemma}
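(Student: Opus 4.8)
The plan is to derive both statements from the two fundamental functoriality properties of the ramification filtration: Herbrand's theorem, asserting that the upper numbering is compatible with passage to a quotient, and the elementary fact that the lower numbering is compatible with passage to a subgroup (see \cite[IV]{Serre-local.fields}). Neither part requires any new computation beyond tracking how the jumps transport.

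For part (1), I would use Herbrand's theorem in the form $(G/H)^v = G^vH/H$, valid for every real $v\ge -1$. Since $H$ is fixed, taking unions over $\epsilon>0$ gives $(G/H)^{v+}=G^{v+}H/H$ as well. Now suppose $u$ is an upper jump of $G/H$, i.e. $(G/H)^u\ne (G/H)^{u+}$; then $G^uH/H\ne G^{u+}H/H$, so $G^uH\ne G^{u+}H$. As the filtration is decreasing we have $G^{u+}\subseteq G^u$, and if $G^u=G^{u+}$ held we would get $G^uH=G^{u+}H$, a contradiction. Hence $G^u\ne G^{u+}$ and $u$ is an upper jump of $G$.

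For part (2), I would invoke the subgroup compatibility $H_j=H\cap G_j$ for every $j$, applied to the normal subgroup $H=G_i$. For indices $j\le i$ one has $G_i\subseteq G_j$, so $H_j=G_i=H$; for indices $j\ge i$ one has $G_j\subseteq G_i$, so $H_j=G_j$. Thus the lower numbering filtration of $H$ is constant (equal to $H$) up to index $i$ and agrees with that of $G$ from index $i$ onward. Consequently every lower jump $l$ of $H$ satisfies $l\ge i$ and occurs precisely where $H_l=G_l\ne G_{l+}=H_{l+}$, that is, exactly where $G$ has a lower jump. Therefore each $l_1,\ldots,l_r$ is a lower jump of $G$.

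The only genuinely delicate points are bookkeeping ones: in part (1) one must verify that the ``$+$'' operation commutes with the quotient map, which follows because the filtration stabilizes for small $\epsilon$; and in part (2) one must confirm that no jump of $H$ is hidden in the range $j<i$, which is exactly the content of the computation $H_j=H$ there. The substantive input is Herbrand's theorem for part (1); part (2) is purely a matter of intersecting the given filtration with $H=G_i$.
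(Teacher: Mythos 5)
Your proof is correct and takes essentially the same approach as the paper: part (1) is Herbrand's theorem $(G/H)^v=G^vH/H$ (\cite[IV, Proposition 14]{Serre-local.fields}) together with the observation that $G^{u}H/H\ne G^{u+}H/H$ forces $G^{u}\ne G^{u+}$, and part (2) is the subgroup compatibility $H_j=H\cap G_j$ (\cite[IV, Proposition 2]{Serre-local.fields}) applied to $H=G_i$. The paper's proof cites exactly these two propositions; you have merely written out the bookkeeping it leaves implicit.
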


 \begin{proof}
  The statement (1) follows immediately from \cite[IV, 1, Proposition 14]{Serre-local.fields} by noting that if $G^{u+}H/H \subsetneq G^uH/H$ then $G^{u+}\subsetneq G^u$. The statement (2) is a consequence of \cite[IV, 1, Proposition 2]{Serre-local.fields}.
 \end{proof}

 \begin{cor}\label{cor.to.upperjumpslift}
  Let $L/K$ and $M/K$ be Galois extensions of local fields with the upper jumps of their Galois groups $U=\{u_1,\ldots, u_n\}$ and $V=\{v_1,\ldots v_m\}$ respectively. Let $N$ be the cardinality of the set $U\cup V$. If $[LM:K]=p^N$ then $U\cup V$ is the set of all the upper jumps of $\Gal(LM/K)$.
 \end{cor}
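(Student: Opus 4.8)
The plan is to combine Lemma \ref{upperjumpslift}(1), which manufactures all the elements of $U\cup V$ as upper jumps of the big group, with a purely group-theoretic cardinality bound on how many upper jumps a $p$-group of order $p^N$ can have; the hypothesis $[LM:K]=p^N$ is exactly what makes these two bounds meet. So the whole argument is a squeeze: one inclusion plus a counting inequality forcing equality.

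First I would set up the Galois theory. Since $L/K$ and $M/K$ are Galois, the compositum $LM/K$ is Galois; write $G=\Gal(LM/K)$ and note $|G|=[LM:K]=p^N$, so $G$ is a $p$-group. Put $H_L=\Gal(LM/L)$ and $H_M=\Gal(LM/M)$, both normal in $G$, with $G/H_L\cong\Gal(L/K)$ and $G/H_M\cong\Gal(M/K)$. Applying Lemma \ref{upperjumpslift}(1) to the quotient map $G\onto G/H_L$ shows that every $u_i\in U$ is an upper jump of $G$, and applying it to $G\onto G/H_M$ shows that every $v_j\in V$ is an upper jump of $G$. Hence, writing $J$ for the set of upper jumps of $G$, we obtain $U\cup V\subseteq J$, and in particular $|J|\ge |U\cup V|=N$.

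It remains to prove the reverse inequality $|J|\le N$. For this I would view the upper-numbering filtration over $u\ge 0$ as a strictly decreasing chain of subgroups of $G$: the distinct groups occurring as $G^u$ and $G^{u+}$ for $u\ge 0$ form a chain $G^0\supsetneq\cdots\supsetneq\{1\}$, and by the very definition of an upper jump the number of elements of $J$ equals the number of strict inclusions in this chain. Because $G$ is a $p$-group of order $p^N$, every subgroup in the chain is a $p$-group and each strict inclusion drops the order by a factor of at least $p$; hence any strictly decreasing chain of subgroups of $G$ has at most $N$ strict inclusions, giving $|J|\le N$. Combining with the previous paragraph yields $N\le|J|\le N$, so $|J|=N$; together with $U\cup V\subseteq J$ and $|U\cup V|=N$ this forces $J=U\cup V$, which is the assertion.

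The only genuinely new input beyond Lemma \ref{upperjumpslift} is the counting step, and the one point that deserves care there is that each subgroup appearing in the filtration of the $p$-group $G$ is again a $p$-group, so consecutive strict inclusions lower the order by at least $p$ and the chain length is bounded by $N=\log_p|G|$. I do not expect a serious obstacle: the hypothesis $[LM:K]=p^N$ is precisely the tightness condition that pins $|J|$ between $N$ and $N$, and it even forces $LM/K$ to be totally (wildly) ramified as a byproduct, since otherwise $|G^0|<p^N$ would make the chain too short to accommodate all of $U\cup V$.
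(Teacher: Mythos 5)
Your proof is correct and follows essentially the same route as the paper: Lemma \ref{upperjumpslift}(1) applied to the normal subgroups $\Gal(LM/L)$ and $\Gal(LM/M)$ gives $U\cup V$ as upper jumps, and the hypothesis $[LM:K]=p^N$ caps the number of jumps at $N$. The only difference is that you spell out the counting bound (a strictly decreasing chain of subgroups of a group of order $p^N$ has at most $N$ strict inclusions), which the paper states in one sentence without elaboration.
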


 \begin{proof}
  Let $G=\Gal(LM/K)$. Applying the above lemma with $H$ as $\Gal(LM/L)$ and $\Gal(LM/M)$ we see that all the elements of $U\cup V$ are upper jumps of $G$. Since $[LM:K]=p^N$, $G$ can have at most $N$ upper jumps.
 \end{proof}

 Let $K$ be field of characteristic $p>0$ and $\bar K$ its algebraic closure. Let $W_n(K)$ denote the Witt ring of length $n$. The ring operations of Witt rings are a little complicated. For instance, let $(a_0,a_1), (b_0,b_1)\in W_2(K)$ then:

 \begin{equation*}
  \begin{split}
   (a_0,a_1)+_w(b_0,b_1)=& \big(a_0+b_0,\ a_1+b_1-\big[a_0^{p-1}b_0+\frac{(p-1)}{1}a_0^{p-2}b_0^2 \\ 
   &+\frac{(p-1)(p-2)}{2}a_0^{p-3}b_0^3+ \ldots+\frac{(p-1)!}{(p-1)!}a_0b_0^{p-1}\big]\big)
  \end{split}
 \end{equation*}
 \begin{equation}\label{witt.substraction}
  \begin{split}
   (a_0,a_1)-_w(b_0,b_1)=\big( a_0-b_0,\ &a_1-b_1-\big[-a_0^{p-1}b_0+\frac{(p-1)}{1}a_0^{p-2}b_0^2-\ldots\\ 
   &+\frac{(p-1)!}{(p-1)!}a_0b_0^{p-1}\big]\big) 
  \end{split}
 \end{equation}

 \begin{definition}
  Let $L/K$ be a field extension. We say an element $\alpha\in L\setminus K$ is an \emph{AS-element} of $L/K$ if $\alpha^p-\alpha\in K$. Moreover if $(K,v_K)$ is a local field, $L/K$ is totally ramified and $v_K(\alpha^p-\alpha)$ is coprime to $p$ then $\alpha$ is said to be a \emph{reduced $AS$-element} and $\alpha^p-\alpha$ to be a reduced element of $K$.
 \end{definition}

 \begin{definition}
  Let $(a_0,\ldots, a_{n-1})\in W_n(K)$. We will say that $L/K$ is a field extension corresponding to $(a_0,\ldots, a_{n-1})$ if there exists $(\alpha_0,\ldots, \alpha_{n-1})\in W_n(L)$ such that $(\alpha_0^p,\ldots, \alpha_{n-1}^p)-_w(\alpha_0,\ldots, \alpha_{n-1})=(a_0,\ldots,a_{n-1})$ and $L=K(\alpha_0,\ldots, \alpha_{n-1})$.
 \end{definition}

 \begin{remark} \label{reduced}
  Let $L/K$ be a totally ramified extension of complete local fields with perfect residue field. If $\alpha$ is an AS-element of $L/K$ then there exist $x\in K$ such that $\alpha-x$ is a reduced AS-element of $L/K$. In fact more generally, for any $(a_0,\ldots, a_{n-1})\in W_n(K)$ there exist $(a'_0,\ldots, a'_{n-1})\in W_n(K)$ such that $v_K(a'_i)$ is coprime to $p$ for all $i$ and $(a_0,\ldots, a_{n-1})-_w (a'_0,\ldots, a'_{n-1})=(x_0^p,\ldots, x_{n-1}^p)-_w(x_0,\ldots,x_{n-1})$ for some $(x_0,\ldots,x_{n-1})\in W_n(K)$ (\cite{schmid}, \cite[Proposition 4.1]{thomas}). We shall say $(a'_0,\ldots,a'_{n-1})$ is \emph{reduced}.
 \end{remark}

 \begin{definition}
  Let $L/K$ be a compositum of Artin-Schrier extensions. A subset $\{\alpha_1,\ldots,\alpha_n\} \in L\setminus K$ is an \emph{AS-generating set} of $L/K$ if $\alpha_1,\ldots,\alpha_n$ are AS-elements and $L=k(\alpha_1,\ldots,\alpha_n)$. Moreover the above set will be called an \emph{AS-basis} if $[L:K]=p^n$.
 \end{definition}

 \begin{lemma}\label{AS-basis}
  Let $L=K(\alpha_1,\ldots,\alpha_n)$ be a compositum of Artin-Schrier extensions, where $\alpha_i\in \bar K$, $f_i=\alpha_i^p-\alpha_i\in K$ for all $i$ and $\{\alpha_1,\ldots,\alpha_n\}$ is an AS-basis of $L/K$. Let $\gamma\in L$ be such that $\gamma^p-\gamma\in K$ then $\gamma=a_1\alpha_1+\ldots a_n\alpha_n+x$ for some $a_1,\ldots, a_n\in \FF_p$ and $x\in K$.   
 \end{lemma}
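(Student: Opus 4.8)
The plan is to translate the problem into linear algebra over $\FF_p$ by means of Galois characters. Since $L=K(\alpha_1,\ldots,\alpha_n)$ is generated by Artin-Schrier elements, $L/K$ is Galois and its group $G=\Gal(L/K)$ is elementary abelian: each $\sigma\in G$ sends $\alpha_i$ to another root of $X^p-X-f_i$, so $\sigma\alpha_i-\alpha_i\in\FF_p$, and $\sigma$ is determined by the tuple $(\sigma\alpha_i-\alpha_i)_i$. As $\{\alpha_1,\ldots,\alpha_n\}$ is an AS-basis we have $|G|=[L:K]=p^n$, hence $G\cong(\ZZ/p\ZZ)^n$ and the dual $\Hom(G,\FF_p)$ is an $n$-dimensional $\FF_p$-vector space.

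The key device is the assignment that sends an element $\beta\in L$ with $\beta^p-\beta\in K$ to the function $\chi_\beta\colon G\to\FF_p$, $\chi_\beta(\sigma)=\sigma\beta-\beta$. First I would check this is well defined and a homomorphism. The operator $\beta\mapsto\beta^p-\beta$ is additive with kernel $\FF_p$, and since $\beta^p-\beta\in K$ is fixed by $G$, the difference $\sigma\beta-\beta$ is annihilated by it and thus lies in $\FF_p$; the computation $\chi_\beta(\sigma\tau)=\chi_\beta(\sigma)+\chi_\beta(\tau)$ then follows because $\FF_p$ is fixed pointwise by $G$, so $\sigma\tau\beta=\sigma\beta+\chi_\beta(\tau)$. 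Two further formal properties will carry the whole argument: the map $\beta\mapsto\chi_\beta$ is $\FF_p$-linear (as each $\sigma$ is a field automorphism fixing $\FF_p$), and $\chi_\beta=0$ exactly when $\sigma\beta=\beta$ for all $\sigma$, i.e. when $\beta\in K$.

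Next I would show that $\chi_{\alpha_1},\ldots,\chi_{\alpha_n}$ form a basis of $\Hom(G,\FF_p)$. Since this space has dimension $n$, it suffices to prove linear independence. A relation $\sum_i c_i\chi_{\alpha_i}=0$ with $c_i\in\FF_p$ means $\chi_{\sum_i c_i\alpha_i}=0$ by linearity, hence $\sum_i c_i\alpha_i\in K$ by the kernel description. If some $c_j\ne0$, this would express $\alpha_j$ through $K$ and the remaining $\alpha_i$, so $L=K(\{\alpha_i:i\ne j\})$ would have degree at most $p^{n-1}$ over $K$, contradicting $[L:K]=p^n$. Hence all $c_i=0$. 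This is the only real content of the proof, and it is exactly where the AS-basis hypothesis $[L:K]=p^n$ is indispensable; everything else is formal.

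Finally I would apply this to $\gamma$. By hypothesis $\gamma^p-\gamma\in K$, so $\chi_\gamma\in\Hom(G,\FF_p)$ and we may write $\chi_\gamma=\sum_i a_i\chi_{\alpha_i}=\chi_{\sum_i a_i\alpha_i}$ for some $a_i\in\FF_p$, using that the $\chi_{\alpha_i}$ form a basis and that $\beta\mapsto\chi_\beta$ is linear. Then $\chi_{\gamma-\sum_i a_i\alpha_i}=0$, so $x:=\gamma-\sum_i a_i\alpha_i\in K$, which gives $\gamma=a_1\alpha_1+\cdots+a_n\alpha_n+x$ as desired.
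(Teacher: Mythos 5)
Your proof is correct, but it takes a genuinely different route from the paper's. The paper argues through the Galois correspondence plus a counting argument: it shows, using the classification of Artin-Schrier extensions ($K(\gamma(a))=K(\gamma(b))$ iff the corresponding elements of $K$ differ by an $\FF_p$-scalar multiple and a term $x^p-x$), that the fields $K(a_1\alpha_1+\cdots+a_n\alpha_n)$ give exactly $(p^n-1)/(p-1)$ distinct $p$-cyclic subextensions of $L/K$, which equals the number of index-$p$ subgroups of $G=(\ZZ/p\ZZ)^n$; hence $K(\gamma)$ must be one of these fields, and a final Artin-Schrier manipulation recovers $\gamma$ in the claimed form. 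You instead dualize from the start: the pairing $\beta\mapsto\chi_\beta$, $\chi_\beta(\sigma)=\sigma\beta-\beta$, maps the $\FF_p$-space of elements $\beta\in L$ with $\beta^p-\beta\in K$ into $\Hom(G,\FF_p)$ with kernel exactly $K$, and your linear-independence argument (which is where the AS-basis hypothesis $[L:K]=p^n$ enters, playing the role the counting plays in the paper) shows the $\chi_{\alpha_i}$ form a basis, after which the conclusion is pure linear algebra. Your approach buys brevity and a stronger structural statement in passing --- namely that $\beta\mapsto\chi_\beta$ induces an isomorphism $\{\beta\in L:\beta^p-\beta\in K\}/K\cong\Hom(G,\FF_p)$ --- and it avoids both the enumeration of intermediate fields and the appeal to the coincidence criterion for Artin-Schrier extensions; the paper's argument, in exchange, stays entirely within the most elementary statement of Artin-Schrier theory for degree-$p$ extensions and the Galois correspondence, without introducing the character-theoretic machinery.
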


 \begin{proof}
  Note that $G=\Gal(L/K)=(\ZZ/p\ZZ)^n$. By Galois theory $\Gal(L/K(\gamma))$ is an index $p$ subgroup of $G$. Let $\gamma(a_1,\ldots, a_n)=a_1\alpha_1+\ldots +a_n\alpha_n\in L$ where $a_1,\ldots, a_n\in \FF_p$. Then $\gamma(a_1,\ldots, a_n)^p-\gamma(a_1,\ldots, a_n)\in K$. Moreover, as $1, \alpha_1,\ldots \alpha_n$ are linearly independent over $K$, some $a_i\ne 0$. So $\gamma(a_1,\ldots, a_n)\in L\setminus K$. Hence $\Gal(L/K(\gamma(a_1,\ldots, a_n)))$ is an index $p$-subgroup of $G$ if $(a_1,\ldots,a_n)\ne (0,\ldots,0)$. From Artin-Schrier theory, we know that for $b_1,\ldots, b_n\in \FF_p$, $K(\gamma(a_1,\ldots, a_n))=K(\gamma(b_1,\ldots, b_n))$ iff 
  $$\gamma(a_1,\ldots, a_n)^p-\gamma(a_1,\ldots, a_n)=c(\gamma(b_1,\ldots, b_n)^p-\gamma(b_1,\ldots, b_n))+x^p-x$$ for some $c\in \FF_p$ and $x\in K$. Equivalently, $a_1f_1+\ldots+a_nf_n=c(b_1f_1+\ldots+b_nf_n)+x^p-x$. 
  This is equivalent to $x^p-x=(a_1-cb_1)f_1+\ldots+(a_n-cb_n)f_n$. Since for each $i$, $K(\alpha_i)$ and $K(\alpha_j| 1\le j\le n, j\ne i)$ are linearly disjoint over $K$, no nontrivial $\FF_p$-linear combination of $f_1,\ldots, f_n$ is of the form $x^p-x$. Hence $x^p-x=(a_1-cb_1)f_1+\ldots+(a_n-cb_n)f_n$ is equivalent to $(a_1,\ldots, a_n)=c(b_1,\ldots, b_n)$. 
 
  So we have shown that there are $(p^n-1)/(p-1)$ many distinct $p$-cyclic intermediate extensions of the form $K(\gamma(a_1,\ldots,a_n))/K$ of $L/K$. Also there are $(p^n-1)/(p-1)$ subgroups of index $p$ in $G$. Hence by Galois theory $K(\gamma)=K(\gamma(a_1,\ldots,a_n))$ for some $a_1,\ldots,a_n$ not all zero. But this implies $\gamma^p-\gamma=c(\gamma(a_1,\ldots,a_n)^p-\gamma(a_1,\ldots,a_n))+x^p-x$ for some $c\in \FF_p$ and $x\in K$. Simplifying, we obtain $$(\gamma-(ca_1\alpha_1+\ldots+ca_n\alpha_n)-x)^p-(\gamma-(ca_1\alpha_1+\ldots+ca_n\alpha_n)-x)=0$$
  Hence $\gamma=(ca_1\alpha_1+\ldots+ca_n\alpha_n)+x+d$ for some $d\in \FF_p$.
 \end{proof}

 \begin{pro}
  Let $L/K$ and $M/K$ be linearly disjoint Galois extensions with Galois groups $G_1$ and $G_2$ respectively. Let $v$, $w_1$ and $w_2$ be the valuation associated to $K$, $L$ and $M$ respectively with $w_1$ and $w_2$ being extensions of $v$. Let $I_1$ and $I_2$ be the decomposition subgroup of $G_1$ and $G_2$ at $w_1$ and $w_2$ respectively. The decomposition subgroup of the compositum $LM/K$ at a valuation $w$ of $LM$ which extends $w_1$ and $w_2$ is the subgroup $I_1\times_I I_2$ of $G_1\times G_2$ where $I=\Gal(\hat L \cap \hat M/\hat K)$.
 \end{pro}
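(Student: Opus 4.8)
The plan is to translate everything into Galois groups of completions, where the compositum is easy to describe, and then transport back. Fix compatible choices once and for all by setting $w_1=w|_L$ and $w_2=w|_M$, so that the completions $\hat L$ and $\hat M$ sit inside $\widehat{LM}$ as extensions of $\hat K$. Since $L/K$ and $M/K$ are linearly disjoint Galois extensions, $LM/K$ is Galois and $\sigma\mapsto(\sigma|_L,\sigma|_M)$ identifies $\Gal(LM/K)$ with $G_1\times G_2$; it is under this identification that we must locate the decomposition group $D$ of $LM/K$ at $w$.

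First I would record the standard identifications of decomposition groups with Galois groups of completions. Restriction to the dense subfield $LM$ gives a canonical isomorphism $\Gal(\widehat{LM}/\hat K)\xrightarrow{\sim}D$, and likewise $\Gal(\hat L/\hat K)\xrightarrow{\sim}I_1$ and $\Gal(\hat M/\hat K)\xrightarrow{\sim}I_2$ (the latter as subgroups of $G_1$ and $G_2$). These isomorphisms are compatible with restriction: the projection $D\to G_1$ followed by nothing corresponds to the restriction map $\Gal(\widehat{LM}/\hat K)\to\Gal(\hat L/\hat K)$, because both send an automorphism to its effect on $L$. In particular the image of $D$ in $G_1$ is $I_1$ and in $G_2$ is $I_2$.

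Next I would show $\widehat{LM}=\hat L\hat M$ inside a fixed algebraic closure of $\hat K$. Indeed, inside $\widehat{LM}$ we have $LM\subseteq\hat L\hat M\subseteq\widehat{LM}$, where $\hat L$ and $\hat M$ are the closures of $L$ and $M$; since $LM$ is dense in $\widehat{LM}$, so is $\hat L\hat M$, and the latter is complete (being a finite extension of the complete field $\hat K$) hence closed, whence $\hat L\hat M=\widehat{LM}$. Now the usual Galois description of a compositum applies: as $\hat L/\hat K$ and $\hat M/\hat K$ are Galois, the restriction map $\sigma\mapsto(\sigma|_{\hat L},\sigma|_{\hat M})$ gives an isomorphism
$$\Gal(\hat L\hat M/\hat K)\;\xrightarrow{\ \sim\ }\;\{(\sigma_1,\sigma_2)\in\Gal(\hat L/\hat K)\times\Gal(\hat M/\hat K):\sigma_1|_{\hat L\cap\hat M}=\sigma_2|_{\hat L\cap\hat M}\},$$
and the right-hand side is precisely $I_1\times_I I_2$ with $I=\Gal(\hat L\cap\hat M/\hat K)$.

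Finally I would assemble the pieces: combining $D\cong\Gal(\widehat{LM}/\hat K)=\Gal(\hat L\hat M/\hat K)$ with the fiber-product description and the compatibility of restriction maps from the second step, the image of $D$ under $\sigma\mapsto(\sigma|_L,\sigma|_M)$ in $G_1\times G_2$ is exactly $I_1\times_I I_2$. The main point to verify carefully is this compatibility — that restricting an element of $\Gal(\hat L\hat M/\hat K)$ to $L$ matches, under the decomposition-group isomorphisms, the projection $D\to G_1$ — together with the observation that $\hat L\cap\hat M$ is Galois over $\hat K$ and intermediate in both $\hat L/\hat K$ and $\hat M/\hat K$, so that the two restriction maps $I_1\to I$ and $I_2\to I$ are the surjections defining the fiber product. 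Everything else is standard.
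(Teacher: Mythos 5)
Your proof is correct, and it follows the same basic route as the paper's: pass to completions and identify $\Gal(\hat L\hat M/\hat K)$ with the fiber product $I_1\times_I I_2$ via the standard Galois theory of composita. The one genuine difference is that the paper outsources the key identification --- that the decomposition group of $LM/K$ at $w$ is isomorphic to $\Gal(\hat L\hat M/\hat K)$ --- to \cite[Lemma 3.1]{wild.ram}, whereas you prove it from scratch: first the standard isomorphism between the decomposition group $D$ and $\Gal(\widehat{LM}/\hat K)$, and then the equality $\hat L\hat M=\widehat{LM}$, obtained by observing that $\hat L\hat M$ is complete (being a finite extension of the complete field $\hat K$), hence closed in $\widehat{LM}$, while it contains the dense subfield $LM$. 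That completeness-plus-density argument is precisely the content the citation supplies, so your writeup is self-contained where the paper's is not; the price is length, the paper's two-line proof being available only because the author could quote his own earlier lemma. Your explicit check that the isomorphism $D\cong\Gal(\widehat{LM}/\hat K)$ intertwines the projection $D\to G_1$ with restriction to $\hat L$ (and likewise for $G_2$) is also a point the paper leaves implicit, and it is needed to conclude that the subgroup of $G_1\times G_2$ one lands on is literally $I_1\times_I I_2$.
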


 \begin{proof}
  Note that all fields in the above proposition can be viewed as subfields of $\widehat{LM}$, the completion of $LM$ along $w$. Moreover, the inclusion of $\hat L \cap \hat M$ in $\hat L$ (respectively $\hat M$) gives an epimorphism $I_1\to I$ (respectively $I_2\to I$).

  By \cite[Lemma 3.1]{wild.ram} the decomposition subgroup of $LM/K$ at $w$ is isomorphic to $\Gal(\hat L \hat M/\hat K)$. But this is same as $I_1\times_I I_2$ since $\Gal(\hat L/\hat K)=I_1$ and $\Gal(\hat M/\hat K)=I_2$. 
 \end{proof}

\section{Local theory}

 Let $K$ be a local field of characteristic $p$, $R$ the associated valuation ring, $v$ the valuation and $k$ the residue field. Note that a $p$-cyclic extension of $K$ has only one jump in the ramification filtration.
 \begin{lemma}\label{AS-upper}
  Let $L/K$ be a Galois extension of local fields. Let $\alpha\in L$ be such that $\alpha^p-\alpha\in K$ and $v(\alpha^p-\alpha)=-i$ is prime to $p$. Then $i$ is an upper jump of $G=\Gal(L/K)$. 
 \end{lemma}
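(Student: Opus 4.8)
The plan is to reduce to the degree $p$ subextension $K(\alpha)/K$ and then lift its upper jump to $G$ using Lemma \ref{upperjumpslift}. First I would verify that $\alpha\notin K$: if $\alpha\in K$, then $v(\alpha^p-\alpha)$ is either nonnegative (when $v(\alpha)\ge 0$) or equal to $p\,v(\alpha)$ (when $v(\alpha)<0$), and in the latter case it is divisible by $p$; both possibilities contradict $v(\alpha^p-\alpha)=-i$ with $(i,p)=1$. Hence $\alpha\notin K$, and since the conjugates of $\alpha$ are $\alpha,\alpha+1,\ldots,\alpha+(p-1)$, the extension $K(\alpha)/K$ is cyclic of degree $p$ with $\Gal(K(\alpha)/K)\cong\Z/p\Z$. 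Because $v(\alpha^p-\alpha)=-i$ is prime to $p$, the element $\alpha$ is a reduced AS-element and $K(\alpha)/K$ is totally wildly ramified.

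Next I would record the ramification filtration of $H':=\Gal(K(\alpha)/K)\cong\Z/p\Z$. This is the classical degree $p$ computation (see \cite[IV, 2, Exercise 5]{Serre-local.fields}): the unique lower jump occurs at $l_1=i$, that is, $H'_j=H'$ for $j\le i$ and $H'_j=1$ for $j>i$. Since $H'$ is a $p$-group, $s_1=1$, so the conversion formula of Remark \ref{upper-lower-conversion} yields $u_1=l_1/s_1=i$. Equivalently, $[H':H'_u]=1$ for $u\le i$ forces $\phi(v)=v$ on $[0,i]$, so that the upper jump coincides with the lower jump, namely $i$.

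Finally, since $K(\alpha)/K$ is Galois, $H:=\Gal(L/K(\alpha))$ is a normal subgroup of $G$ with $G/H\cong\Gal(K(\alpha)/K)$, which has $i$ as an upper jump. Applying Lemma \ref{upperjumpslift}(1) with this $H$, the upper jump $i$ of $G/H$ is also an upper jump of $G$, which is exactly the assertion.

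The argument is routine once the correct subextension is isolated; the only points requiring care are confirming $\alpha\notin K$ and that the single break of a degree $p$ extension agrees in the upper and lower numbering. After these observations the statement is a direct appeal to Lemma \ref{upperjumpslift}, so I do not anticipate any genuine obstacle — the result is essentially a repackaging of the standard degree $p$ computation together with the already-established behavior of upper jumps under passage to quotients.
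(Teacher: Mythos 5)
Your proof is correct and follows essentially the same route as the paper: pass to the degree $p$ subextension $K(\alpha)/K$, identify its unique upper jump as $i$ via \cite[IV, 2, Exercise 5]{Serre-local.fields}, and lift it to $G$ by Lemma \ref{upperjumpslift}(1) applied to the normal subgroup $H=\Gal(L/K(\alpha))$. The extra checks you include (that $\alpha\notin K$, and that the upper and lower jumps agree for a totally ramified degree $p$ extension) are details the paper leaves implicit, but they do not change the argument.
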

 \begin{proof}
  The inclusion $K(\alpha) \subset L$ induces an epimorphism $G\to G/H=\bar G=\Gal(K(\alpha)/K)$ where $H=\Gal(L/K(\alpha))$. We know that the only upper jump of $\bar G$ is $i$ (\cite[IV, 2, Exercise 5]{Serre-local.fields}). By Lemma \ref{upperjumpslift}, $i$ is an upper jump of $G$ as well.
 \end{proof}

 \begin{lemma}
  Let $L$ and $M$ be totally ramified Galois extensions of $K$ with Galois groups of exponent $p$. Suppose the upper numbering jumps $\{u_1,\ldots,u_l\}$ and $\{v_1,\ldots, v_m\}$ of $L/K$ and $M/K$ respectively are all distinct, i.e., $u_i\ne v_j$ for all $i,j$. Then the first upper jump $w_1$ of $LM/K$ is the minimum of $u_1$ and $v_1$. Moreover $\Gal(LM/K)^{w_1+}=\Gal(L/K)^{w_1+}\times\Gal(M/K)^{w_1+}$.
 \end{lemma}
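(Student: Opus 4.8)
The plan is to reduce the statement to a computation of the upper jumps of the $p$-cyclic subextensions of $LM/K$, and then to read those jumps off from reduced Artin--Schreier elements.

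First I would record that $L$ and $M$ are linearly disjoint over $K$, so that $G:=\Gal(LM/K)=G_1\times G_2$ with $G_1=\Gal(L/K)$ and $G_2=\Gal(M/K)$. Indeed $\Gal((L\cap M)/K)$ is a quotient of both $G_1$ and $G_2$, so by Lemma \ref{upperjumpslift}(1) its upper jumps lie in $\{u_1,\ldots,u_l\}\cap\{v_1,\ldots,v_m\}=\emptyset$; since a nontrivial totally ramified $p$-extension always has a jump, this forces $L\cap M=K$, and both extensions being Galois they are linearly disjoint. One also notes that $LM/K$ is totally ramified (total ramification is preserved under base change, so $LM/L$ and hence $LM/K$ is totally ramified), whence $G^0=G$. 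By the symmetry of the hypotheses I may assume $u_1<v_1$, so that the assertion becomes $w_1=u_1$ and $G^{u_1+}=G_1^{u_1+}\times G_2$.

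Next, since $G$ is elementary abelian it is an $\FF_p$-vector space, and every subgroup is the intersection of the index-$p$ subgroups (hyperplanes) containing it. For an index-$p$ subgroup $H$ write $j(H)$ for the unique upper jump of the $p$-cyclic quotient $G/H$. Because the upper numbering passes to quotients (\cite[IV, 1, Proposition 14]{Serre-local.fields}) we have $(G/H)^{w+}=G^{w+}H/H$, and a $p$-cyclic group satisfies $(G/H)^{w+}=\{e\}$ precisely when $j(H)\le w$; hence $G^{w+}\subseteq H$ iff $j(H)\le w$, giving
\[
 G^{w+}=\bigcap_{H:\,j(H)\le w} H .
\]
The same identity applied to $G_1$ reads $G_1^{u_1+}=\bigcap_{\chi_1}\ker\chi_1$, the intersection taken over those characters $\chi_1$ of $G_1$ whose cyclic subextension has jump $u_1$; here I use $u_1=\min\{u_i\}$, so that for $G_1$ the conditions $j(H)\le u_1$ and $j(H)=u_1$ coincide. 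It therefore remains to identify the index-$p$ subgroups $H\le G$ with $j(H)\le u_1$.

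The heart of the matter is the jump computation. An index-$p$ subgroup is $H=\ker(\chi_1,\chi_2)$ with $\chi_i$ a character of $G_i$, and the corresponding cyclic subextension $F_\chi/K$ has Artin--Schreier element $\gamma_1+\gamma_2$, where $\gamma_1\in L$ and $\gamma_2\in M$ are reduced AS-elements (Remark \ref{reduced}) of the cyclic subextensions cut out by $\chi_1$ and $\chi_2$. Then $(\gamma_1+\gamma_2)^p-(\gamma_1+\gamma_2)=g_1+g_2$ with $v(g_1)=-s$ and $v(g_2)=-t$, where $s$ is a jump of $L/K$, so $s\in\{u_i\}$, and $t$ is a jump of $M/K$, so $t\in\{v_j\}$, both prime to $p$. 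Since the two jump sets are disjoint, $s\ne t$ whenever both characters are nonzero, so $v(g_1+g_2)=-\max(s,t)$, which is still prime to $p$; by Lemma \ref{AS-upper} this yields $j(H)=\max(s,t)$ (and $j(H)=s$, resp. $t$, when one character vanishes). Thus every cyclic subextension has jump $\ge u_1$, and $j(H)=u_1$ forces $\chi_2=0$ together with $j(\ker\chi_1)=u_1$. Intersecting over these $H=\ker\chi_1\times G_2$ gives $G^{u_1+}=\big(\bigcap_{\chi_1}\ker\chi_1\big)\times G_2=G_1^{u_1+}\times G_2=G_1^{w_1+}\times G_2^{w_1+}$, and in particular $w_1=u_1=\min(u_1,v_1)$. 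The main obstacle is exactly this last paragraph: one must work with genuinely reduced representatives and verify that the disjointness of the two jump sets prevents the leading terms of $g_1$ and $g_2$ from cancelling, so that $v(g_1+g_2)$ is exactly $-\max(s,t)$ and remains prime to $p$.
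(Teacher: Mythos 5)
Your proposal is correct in substance, and its architecture is genuinely different from the paper's. The paper proves only the assertion about $w_1$, by contradiction: assuming $w_1<\min(u_1,v_1)$, it invokes Lemma \ref{AS-basis} and Remark \ref{reduced} to write a hypothetical AS-element $\gamma$ with jump $w_1$ as $a\alpha+b\beta+x$ with $\alpha\in L$, $\beta\in M$ reduced, and then derives a contradiction from exactly the no-cancellation observation you make (the numbers $-v(\alpha^p-\alpha)$ and $-v(\beta^p-\beta)$ lie in the two disjoint jump sets, while $v(x^p-x)$ is either $\ge 0$ or divisible by $p$). You run the duality in the opposite direction: you compute the jump of \emph{every} $p$-cyclic subextension of $LM/K$ and then reconstruct $G^{w+}$ as an intersection of hyperplanes via Herbrand's theorem. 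This costs more bookkeeping but buys two things the paper's proof does not contain: an actual proof of the ``Moreover'' identity $\Gal(LM/K)^{w_1+}=\Gal(L/K)^{w_1+}\times\Gal(M/K)^{w_1+}$, which the paper's argument never addresses, and an explicit justification of the linear disjointness of $L$ and $M$, which is needed for $G=G_1\times G_2$ and which the paper uses only tacitly when it applies Lemma \ref{AS-basis} to $LM/K$.

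One parenthetical justification in your first paragraph is false as stated, though the repair is already contained in your third paragraph. Total ramification is \emph{not} preserved under base change or composita in this setting: take $K=\FF_p((t))$, $\alpha^p-\alpha=t^{-1}$, $\beta^p-\beta=t^{-1}+c$ with $c\in\FF_p^\times$; then $K(\alpha)/K$ and $K(\beta)/K$ are totally ramified, but $\beta-\alpha$ generates an unramified degree-$p$ subextension of the compositum, so $K(\alpha,\beta)/K(\alpha)$ is unramified. (This is exactly case (2) of Proposition \ref{p-cyclic-compositum}; it is excluded in the present lemma only because those two extensions share the jump $1$.) So you may not assert $G^0=G$ at the outset, and your second paragraph, which presumes every quotient $G/H$ has a well-defined jump $j(H)$, momentarily rests on it. The fix is purely organizational: either state the intersection identity as $G^{w+}=\bigcap\{H:\,(G/H)^{w+}=1\}$, which needs no ramification hypothesis, and then use your jump computation to see that $(G/H)^{w+}=1$ iff $j(H)\le w$ (in particular no quotient $G/H$ is unramified, which also yields $G_0=G$); or simply carry out the third paragraph before the second. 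With that reordering the proof is complete.
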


 \begin{proof}
  Let $G=\Gal(L/K)$, $H=\Gal(M/K)$ and $\Gamma=\Gal(LM/K)$. Without loss of generality suppose $u_1 < v_1$. Since $G$ and $H$ are $p$-groups $G^{u_1}=G=G_{u_1}$ and $H^{v_1}=H=H_{v_1}$. By Lemma \ref{upperjumpslift}, $\Gamma$ has an upper jump at $u_1$. So $w_1 \le u_1$.  
  
  Suppose $w_1 < u_1$, then there exist $\gamma \in LM\setminus K$ such that $\gamma^p-\gamma\in K$ and $v(\gamma^p-\gamma)=-w_1$. Since $w_1$ is also the first lower jump of $\Gamma$, $(p,w_1)=1$.
  By Lemma \ref{AS-basis} and Remark \ref{reduced}, $\gamma=a\alpha+b\beta +x$ for some $\alpha\in L$, $\beta \in M$ reduced AS element w.r.t. $K$, $a,b\in \FF_p$ not both zero and $x\in K$. By Lemma \ref{AS-upper}, $-v(\alpha^p-\alpha)$ and $-v(\beta^p-\beta)$ are equal to one of the upper jumps of $G$ and $H$ respectively. Since the upper numbering jumps of $G$ and $H$ are distinct $v(\alpha^p-\alpha)\ne v(\beta^p-\beta)$. If $v(x)\ge 0$ then $v(x^p-x)\ge 0 > min(v(a(\alpha^p-\alpha)),v(b(\beta^p-\beta)))$ otherwise $v(x^p-x)$ is a multiple of $p$. So $-w_1=v(\gamma^p-\gamma)=\min\{v(a(\alpha^p-\alpha)),v(b(\beta^p-\beta))\}$. But this implies $w_1\ge u_1$, a contradiction!
 \end{proof}

 \begin{pro}\label{p-cyclic-compositum}
  Let $L$ and $M$ be distinct totally ramified Artin-Schrier extensions of $K$ corresponding to $f$ and $g$ respectively, where $f, g \in K$ are reduced. Let $i$ be the upper jump of $L/K$ and $j$ be the upper jump of $M/K$. The ramification filtration on the Galois group $G=\Gal(LM/K)$ is given as follows:
  \begin{enumerate}
   \item If $i < j$ then the upper jumps are $i$ and $j$ with $G^i=G=(\Z/p\Z)^2$ and $G^j=\Z/p\Z$.
   \item $LM/K$ is not totally ramified iff for some $a\in \FF_p$ and $x\in K$, $f+ag+x^p-x\in k$. In this case, $i=j$, the inertia group is $\Z/p\Z$ and the only upper jump is at $i$. 
   \item If $i=j$ and $LM/K$ is totally ramified then there are two cases. If $l=-v(f+ag+x^p-x)< i$ for some $a\in \FF_p$ and $x\in K$ then $l\ge 1$ and the upper jumps are $l$ and $i$ with $G^l=G=(\Z/p\Z)^2$ and $G^i=\Z/p\Z$. Other wise there is only one upper jump at $i$ and $G^i=G=(\Z/p\Z)^2$.
  \end{enumerate}
 \end{pro}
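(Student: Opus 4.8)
The plan is to determine $G=\Gal(LM/K)$ together with all of its $p$-cyclic subextensions, and then to extract the ramification jumps of $G$ from the jumps of these subextensions via Corollary \ref{cor.to.upperjumpslift} and the quotient-compatibility of the upper numbering. First I would record that, $L$ and $M$ being distinct Artin-Schrier extensions, $[LM:K]=p^2$ and $G\cong(\Z/p\Z)^2$; writing $L=K(\alpha)$, $M=K(\beta)$ with $\alpha^p-\alpha=f$, $\beta^p-\beta=g$, the $p+1$ index-$p$ subgroups correspond to the subextensions $K(\alpha)$, $K(\beta)$ and $K(\alpha+a\beta)$ for $a\in\FF_p\setminus\{0\}$, and the identity $(\alpha+a\beta)^p-(\alpha+a\beta)=f+ag$ (using $a^p=a$) shows $K(\alpha+a\beta)/K$ corresponds to $f+ag$. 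By Lemma \ref{AS-upper} and Remark \ref{reduced} the upper jump of $K(\alpha+a\beta)/K$ is the reduced valuation $l_a:=-v(f+ag+(x^p-x))$ for the optimal $x\in K$, and those of $L/K,M/K$ are $i,j$; Lemma \ref{AS-basis} ensures these exhaust the subextensions. In Case (1), $U=\{i\}$ and $V=\{j\}$ are disjoint, so $N=2=\log_p[LM:K]$, and Corollary \ref{cor.to.upperjumpslift} gives that the upper jumps of $G$ are exactly $i,j$; since $LM/K$ is totally ramified, $G^i=G=(\Z/p\Z)^2$, while $G^{i+}=G^j$ is a proper nontrivial subgroup, hence of order $p$, so $G^j\cong\Z/p\Z$.

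For Case (2), the ``if'' direction is immediate: if $f+ag+(x^p-x)=c\in k$ then $\gamma:=\alpha+a\beta-x$ satisfies $\gamma^p-\gamma=c$, so $K(\gamma)/K$ is a nontrivial unramified subextension and $LM/K$ is not totally ramified. For ``only if'', the inertia group $I=G_0$ surjects onto the (full) inertia of $L/K$ and of $M/K$, hence onto both factors of $(\Z/p\Z)^2$; being proper it must have order $p$ and be diagonal, and its fixed field is an unramified degree-$p$ subextension of the form $K(\alpha+a\beta)$ whose defining element $f+ag$ is therefore reducible to a constant, which is the stated condition. I would then get $i=j$ by a valuation comparison in $f+ag=c+(x^p-x)$: the term of $f$ of valuation $-i$ is coprime to $p$ and can be matched neither by $c$ nor by $x^p-x$ (whose negative valuations are multiples of $p$), so it must cancel against the corresponding term of $ag$, forcing $-i=-j$. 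Finally $LM/K'\cong\Z/p\Z$ (with $K'$ the maximal unramified subextension) is the unramified base change of $L/K$, so it has a single jump at $i$, and since unramified base change preserves the upper numbering the unique upper jump of $G$ is $i$.

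In Case (3) we have $G=G_0=(\Z/p\Z)^2$ and every subextension is totally ramified, so $1\le l_a\le i$ for all $a$ because $v(f+ag)\ge\min(v(f),v(ag))=-i$. The key combinatorial point is that at most one $a$ can satisfy $l_a<i$: if $l_a,l_{a'}<i$ with $a\neq a'$, then the reduced valuation of $(f+ag)-(f+a'g)=(a-a')g$ would exceed $-i$, contradicting that $g$ is reduced with $v(g)=-i$. In subcase (3a) this unique index produces a jump $l<i$ (and $l\ge1$ by total ramification); since $i$ is also a jump (from $L/K$) and, as in Corollary \ref{cor.to.upperjumpslift}, a group of order $p^2$ has at most two upper jumps, the jumps are exactly $l<i$, whence $G^l=G$ and $G^i\cong\Z/p\Z$ exactly as in Case (1). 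In subcase (3b) every $l_a=i$, and I would rule out any smaller jump $w_1<i$ of $G$ by the identity $(G/H)^v=G^vH/H$ with $H=G^{w_1+}$ of index $p$: this would exhibit a $p$-cyclic subextension with jump $w_1<i$, i.e.\ some $l_a<i$, a contradiction; hence the only jump is $i$ and $G^i=G=(\Z/p\Z)^2$.

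The main obstacle is the bookkeeping in Case (3): controlling the reduced Artin-Schrier representatives of the elements $f+ag$ precisely enough to establish the ``at most one small jump'' dichotomy, and then passing cleanly between the jumps of the $p$-cyclic subquotients and the jumps of $G$ through the Herbrand/upper-numbering compatibility. The valuation argument giving $i=j$ in Case (2) is of the same flavour and again hinges on separating the contributions of valuation coprime to $p$ from those that are multiples of $p$.
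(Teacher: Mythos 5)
Your overall strategy --- reading the jumps of $G$ off its $p+1$ degree-$p$ subextensions via Lemma \ref{AS-upper}, Lemma \ref{AS-basis}, Corollary \ref{cor.to.upperjumpslift}, and the compatibility $(G/H)^v=G^vH/H$ of the upper numbering with quotients --- is essentially the paper's strategy, and your cases (1), (2) and (3a) are correct. The differences are local. In (2), for the ``only if'' direction the paper takes $\gamma\in LM\setminus K$ with $\gamma^p-\gamma\in k$ and expands it as $a\alpha+b\beta+x$ by Lemma \ref{AS-basis}; you instead argue that the inertia group is a proper subgroup of $(\Z/p\Z)^2$ surjecting onto both factors, hence of order $p$, and take its fixed field. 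This is equivalent in substance, and you in fact supply details (the valuation argument for $i=j$, resting on $v(x^p-x)$ being either $\ge 0$ or a negative multiple of $p$, and the identification of $LM/K'$ as an unramified base change of $L/K$) that the paper compresses into ``the rest of (2) follows.'' (Your appeal in case (1) to total ramification of $LM/K$ is also justified by your own case (2), which shows non-total ramification forces $i=j$.) In (3a) the paper is slicker: setting $h=f+ag+x^p-x$ and $\gamma^p-\gamma=h$, it notes $LM=LK(\gamma)$ and reduces to case (1); your jump count works equally well, and your ``at most one $a$ with $l_a<i$'' dichotomy, though true, is never needed. In (3b) the paper uses Lemma \ref{AS-basis} together with the external result \cite[Proposition 2.7]{wild.ram} for the first jump, plus a separate bound for the largest jump; your Herbrand-quotient argument is more self-contained --- but it is also where the one real gap lies.

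As written, your (3b) only excludes jumps $w_1<i$ and then asserts ``hence the only jump is $i$''; nothing you say excludes a jump $w>i$, and that exclusion is genuinely needed --- it is exactly the point for which the paper argues separately that the largest jump of $G$ is at most the maximum of the largest jumps of $G/H_1$ and $G/H_2$, where $H_1=\Gal(LM/L)$ and $H_2=\Gal(LM/M)$ satisfy $H_1\cap H_2=1$, so that $G^w\ne 1$ forces a nontrivial image in one of the two quotients. A second, smaller, point: you take $H=G^{w_1+}$ ``of index $p$'', but a priori $G^{w_1+}$ could be trivial, in which case $G/H$ is not a degree-$p$ quotient and your identity does not directly produce a $p$-cyclic subextension. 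Both issues disappear if you run your argument uniformly over all jumps: for any upper jump $w$ of $G$, choose an index-$p$ subgroup $H'$ with $G^{w+}\subseteq H'$ and $G^w\not\subseteq H'$ (possible because $G^{w+}\subsetneq G^w$ and $G\cong(\Z/p\Z)^2$); then $(G/H')^w=G^wH'/H'\ne 1$ while $(G/H')^{w+}=G^{w+}H'/H'=1$, so $w$ is the unique jump of the degree-$p$ subextension $(LM)^{H'}/K$. Hence every jump of $G$ occurs among the jumps of the $p+1$ degree-$p$ subextensions, and in case (3b) these all equal $i$. With this replacement your proof is complete.
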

 \begin{proof}
  Let $\alpha,\beta\in \bar K$ be such that $\alpha^p-\alpha=f$ and $\beta^p-\beta=g$. Since the upper jump of $L/K$ is $i$, $v(f)=-i$. Similarly, $v(g)=-j$.
 
  Note that (1) follows from Lemma \ref{AS-upper} and Corollary \ref{cor.to.upperjumpslift}.

  For (2) note that if $f+ag+x^p-x\in k$ for some $a\in \FF_p$ and $x\in K$ then $K(\alpha+a\beta)/K$ is an unramified extension of $K$. Hence $LM/K$ is not totally ramified. Conversely, if $LM/K$ is not totally ramified then there exist $\gamma \in LM \setminus K$ such that $\gamma^p-\gamma \in k$, since $\Gal(LM/K)=(\Z/p\Z)^2$ and $K$ is a local field of characteristic $p$. Now by Lemma \ref{AS-basis}, $\gamma=a\alpha+b\beta +x$ for some $x\in K$ and $a,b\in \FF_p$. So $\gamma^p-\gamma=a(\alpha^p-\alpha)+b(\beta^p-\beta)+x^p-x\in k$. Hence $af+bg+x^p-x\in k$. If $a\ne 0$ then dividing by $a$ we get $f+a'g+x'^p-x'\in k$ for some $a'\in\FF_p$ and $x'\in K$. Otherwise $\gamma=b\beta+x \notin K$, so $b\ne 0$. Hence $v(\gamma^p-\gamma)=v(bg+x^p-x)<0$. This contradicts $\gamma^p-\gamma \in k$. The rest of (2) follows.

  For (3) note that if $i=j$ and $l=-v(f+ag+x^p-x)<i$ for some $a\in \FF_p$ and $x\in K$ then $l\ge 1$. This is because if $l=0$ then $LM/K$ will not be totally ramified. Let $h=f+ag+x^p-x$ and $\gamma$ be such that $\gamma^p-\gamma=h$ then $LM=LK(\gamma)$. So we are reduced to (1). Hence the upper jumps are $l$ and $i$.

  Finally in the remaining scenario, if $\gamma\in LM$ is such that $\gamma^p-\gamma\in K$ and $(v(\gamma^p-\gamma),p)=1$ then in view of Lemma \ref{AS-basis}, $v(\gamma^p-\gamma)=-i$. Hence by \cite[Proposition 2.7]{wild.ram} the first upper jump for $LM/K$ is $i$. But the highest upper jump for $LM/K$ is the maximum of upper jumps for $L/K$ and $M/K$. Hence the only upper jump is $i$.
 \end{proof}

\subsection{$p^2$-cyclic extensions}
 We will now consider $p^2$-cyclic extensions and their compositum. We begin by calculating the ramification filtration of a totally ramified $p^2$-cyclic extension of a local field $K$ of characteristic $p$ with algebraically closed residue field $k$. The valuation associated to $K$ will be denoted by $v_K$.

 \begin{lemma}\label{p^2-cyclic-filtration}
  Let $L/K$ be a $p^2$-cyclic totally ramified extension. Then there exist a Witt vector $(a_0,a_1)\in W_2(L)$ such that $(a_0^p,a_1^p)-_w(a_0,a_1)=(\alpha_0,\alpha_1)\in W_2(K)$ is reduced and $L=K(a_0,a_1)$. Let $n_i=-v_K(\alpha_i)$. The jumps in the lower ramification filtration of $L/K$ are $n_0, n_0(p^2-p+1)$ if $n_1\le n_0p$ otherwise it is $n_0, p(n_1-n_0)+n_0$. 
 \end{lemma}

 \begin{proof}
  The first statement follows from Artin-Schrier-Witt theory and Remark \ref{reduced}. So $n_0$ and $n_1$ are coprime to $p$. Let $x$ be a local parameter of $K$. Note that $a_0^p-a_0=\alpha_0=u_0x^{-n_0}$ for some unit $u_0$ in the valuation ring $R$ of $K$. Let $M=K(a_0)$ be the intermediate $p$-cyclic extension. Note that $v_M(x)=p$ and $v_M(a_0)=-n_0$. Let $a=a_0^{1/n_0}$, $y=a^{-1}$. Note that as the residue field is algebraically closed, by Hensel's lemma, $y\in M$. Using the formula \eqref{witt.substraction} for subtraction in the Witt ring, we note that 
  \begin{equation}\label{Witt.sub}
     a_1^p-a_1 =\alpha_1-a_0^{p^2-p+1}+\text{ terms of higher valuation} 
  \end{equation}
  If $n_1\le n_0(p-1)$ then $v_M(a_1^p-a_1)=-n_0(p^2-p+1)$. So the jumps in the lower ramification filtration are $n_0, n_0(p^2-p+1)$ because the lower ramification filtration behaves well with subgroups and the only jump in the lower ramification filtration of $M/K$ and $L/M$ are $n_0$ and $n_0(p^2-p+1)$ respectively.
 
  Now suppose that $n_1>n_0(p-1)$. In this case we shall modify $a_1$ to $\bar a_1$, so that $\bar a_1^p-\bar a_1\in M$ and $v_M(\bar a_1^p-\bar a_1)$ is prime to $p$. Note that
  \begin{align*}
   x^{n_0}&=u_0(a_0^p-a_0)^{-1}\\
   x^{n_0}&=u_0(a^{n_0p}-a^{n_0})^{-1}\\
   x^{n_0}&=y^{n_0p}(1-y^{pn_0-n_0})^{-1}u_0\\
   x&=y^p(1-y^{pn_0-n_0})^{-\frac{1}{n_0}}(u_0)^{-\frac{1}{n_0}}
  \end{align*}
  Using the above and noting that $\alpha_1=u_1x^{-n_1}$ for some unit $u_1\in K$, we can simplify equation \eqref{Witt.sub} to obtain
  \begin{equation}\label{eq1}
   a_1^p-a_1=uy^{-n_1p}(1-y^{pn_0-n_0})^{-\frac{n_1}{n_0}}-y^{-n_0p^2+n_0p-n_0}+\text{ higher valuation terms}
  \end{equation}
  Here $u\in K$ is a unit. We claim the following:
  \begin{clm}
   $uy^{-n_1p}(1-y^{pn_0-n_0})^{\frac{n_1}{n_0}}=(c_0y^{-n_1p}+c_1y^{-n_1p+p}++c_2y^{-n_1p+2p}+\ldots)+(b_0y^{(n_0-n_1)p-n_0}+\text{ higher valuation terms})$, where $c_i$ and $b_0$ are in the coefficient field $k$ with $c_0$ and $b_0$ nonzero.
  \end{clm}
  \begin{proof}[Proof of the claim]
   Since $v_M(y)=1$, by Cohen Structure theorem, $M\cong k((y))$. 
   It is clear from the left hand side of the equation that the leading term is $c_0y^{-n_1p}$ for some nonzero $c_0$ in the coefficient field $k$. So it is enough to show that the smallest $N$ such that the coefficient of $y^N$ is nonzero and $(N,p)=1$ is $(n_0-n_1)p-n_0$. Taking derivative with respect to $y$ of the left hand side, we get
   \begin{equation*}
    \frac{d}{dy}(LHS)=\frac{du}{dy}y^{-n_1p}(1-y^{pn_0-n_0})^{n_1/n_0}+n_1uy^{-n_1p}(1-y^{pn_0-n_0})^{\frac{n_1}{n_0}-1}y^{pn_0-n_0-1}   
   \end{equation*}
   Since $u\in K$ is a unit in $R$, $v_M(du/dy)\ge v_M(dx/dy)$. Moreover, $x=y^p(1-y^{pn_0-n_0})^{-\frac{1}{n_0}}u_0^{-\frac{1}{n_0}}$ implies that
   $$e_0x+e_1x^2+e_2x^3+\ldots=y^p(1-y^{pn_0-n_0})^{-\frac{1}{n_0}}$$
   for some $e_0, e_1, \ldots$ in $k$ with $e_0\ne 0$. Differentiating the above equation with respect to $y$ we get,
   $$\frac{dx}{dy}(e_0+2e_1x+3e_2x^2+\ldots)=-y^p(1-y^{pn_0-n_0})^{-\frac{1}{n_0}-1}y^{pn_0-n_0-1}$$
   So $v_M(\frac{dx}{dy})=pn_0-n_0+p-1$. So $v_M(\frac{d}{dy}(LHS))=(n_0-n_1)p-n_0-1$. This shows that $N=(n_0-n_1)p-n_0$.
  \end{proof}

  Let $\bar a_1=a_1-c_0^{1/p}y^{-n_1}-c_1^{1/p}y^{-n_1+1}-\ldots -c_{n_0}^{1/p}y^{-n_1+n_0}$. Then from \eqref{eq1} and the claim it follows that
  $$\bar a_1^p-\bar a_1=c_0^{1/p}y^{-n_1}+b_0y^{(n_0-n_1)p-n_0}-y^{-n_0p^2+n_0p-n_0}+\text{ terms of higher valuation}$$
  Also note that $\bar a_1^p-\bar a_1\in M$ and $M(\bar a_1)=M(a_1)=L$.
 
  Now if $n_0(p-1)<n_1\le n_0p$ then $v_M(\bar a_1^p-\bar a_1)=-n_0(p^2-p+1)$. So the jumps in the lower ramification filtration in this case as well are $n_0, n_0(p^2-p+1)$.
 
  Finally if $n_1 > n_0p$ then $p(n_0-n_1)-n_0 < -n_0(p^2-p+1)$ and $p(n_0-n_1)-n_0 < -n_1$. So $v_M(\bar a_1^p-\bar a_1)=p(n_0-n_1)-n_0$ and the jumps in the lower ramification filtration are $n_0, p(n_1-n_0)+n_0$.
 \end{proof}

 Let $L/K$ and $M/K$ be distinct totally ramified $p^2$-cyclic extension of $K$ corresponding to the reduced Witt vectors $(\alpha_0,\alpha_1)$ and $(\beta_0,\beta_1)$ in $W_2(K)$. We will compute the ramification filtration of the compositum $LM/K$ in some cases. 
 
 \begin{pro}\label{filtration.linearly.disjoint}
  Suppose $L$ and $M$ are linearly disjoint over $K$. Let the upper jumps of $L/K$ and $M/K$ be $u_0,u_1$ and $v_0,v_1$ respectively. Let the upper jumps of $G=\Gal(LM/K)$ be $w_0,w_1,\ldots$. The following holds:
  \begin{enumerate}
   \item \[ (u_0,u_1) = \begin{cases}
                   (-v_K(\alpha_0),-v_K(\alpha_0)(p-1)), & \text{if }v_K(\alpha_1)\ge v_K(\alpha_0)p \\
                   (-v_K(\alpha_0),v_K(\alpha_1)), & \text{otherwise.}
                       \end{cases}
        \] \[
           (v_0,v_1) = \begin{cases}
                   (-v_K(\beta_0),-v_K(\beta_0)(p-1)), & \text{if }v_K(\beta_1)\ge v_K(\beta_0)p \\
                   (-v_K(\beta_0),v_K(\beta_1)), & \text{otherwise.}
                       \end{cases}         
           \]
   \item  If $u_0\ne v_0$ then $w_0=\min(u_0,v_0)$. Moreover if $u_0,u_1,v_0,v_1$ are all distinct then these are the four upper jumps of $G$.
   \item Suppose $u_0=v_0$. If there exist $c\in \FF_p$ and $x\in K$ such that $l=-v_K(\alpha_0+c\beta_0+x^p-x)<u_0$ then $w_0=l$ otherwise $w_0=u_0$. Moreover, if $u_1\ne v_1$ then $l,u_0, u_1, v_1$ are the only upper jumps of $G$ in the first case and $w_0=u_0,u_1,v_1$ are the only upper jumps of $G$ with $G^{w_0+}=(\Z/p\Z)^2$ in the latter case.
  \end{enumerate}
 \end{pro}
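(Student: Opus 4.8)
The plan is to reduce everything to the already-established $(\Z/p\Z)^2$ computation in Proposition \ref{p-cyclic-compositum}, applied to the maximal exponent-$p$ subextension. Part (1) is purely a translation of Lemma \ref{p^2-cyclic-filtration}: I would take the lower-numbering jumps $n_0, n_0(p^2-p+1)$ (resp.\ $n_0, p(n_1-n_0)+n_0$) computed there and push them through the Herbrand function using the conversion in Remark \ref{upper-lower-conversion}, with $s_1=1$ and $s_2=p$ for a $p^2$-cyclic group. In particular the first upper jump comes out as $u_0=n_0=-v_K(\alpha_0)$, and likewise $v_0=-v_K(\beta_0)$, and the two alternatives of the case-split correspond to the two cases $n_1\le n_0p$ and $n_1>n_0p$ of the lemma.

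The workhorse for parts (2) and (3) is the following observation about the \emph{first} jump. Since $G=\Gal(LM/K)\cong(\Z/p^2\Z)^2$ is a $p$-group, $G^{w}\subsetneq G$ if and only if $G^{w}$ lies in some maximal (index $p$) subgroup, i.e.\ if and only if some $\Z/p\Z$-quotient $G/N$ satisfies $(G/N)^{w}\ne G/N$; and every such quotient factors through $G/\Phi(G)=\Gal(E/K)$, where $E=L_1M_1$ is the compositum of the $\Z/p\Z$-subextensions $L_1\subset L$ and $M_1\subset M$. Using that upper numbering is compatible with quotients (\cite[IV, 1, Proposition 14]{Serre-local.fields}), this shows that $w_0$ equals the first upper jump of $\Gal(E/K)$. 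By Part (1) the jumps of $L_1/K$ and $M_1/K$ are exactly $u_0$ and $v_0$; since $L$ and $M$ are linearly disjoint so are $L_1$ and $M_1$, whence $\Gal(E/K)=(\Z/p\Z)^2$, and (the residue field being algebraically closed) $E/K$ is automatically totally ramified. Thus Proposition \ref{p-cyclic-compositum} computes the first jump of $E/K$ and hence $w_0$: case (1) gives $w_0=\min(u_0,v_0)$ when $u_0\ne v_0$ (Part (2)), while cases (2)--(3) give $w_0=l$ or $w_0=u_0$ when $u_0=v_0$ (Part (3)).

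For the full list of jumps I would combine two facts: first, $u_0,u_1,v_0,v_1$ are all jumps of $G$, obtained by lifting from the quotients $\Gal(L/K)$ and $\Gal(M/K)$ via Lemma \ref{upperjumpslift}(1); second, a group of order $p^4$ admits a strictly descending chain of subgroups of length at most $4$, so $G$ has at most four distinct upper jumps. In Part (2) with $u_0,u_1,v_0,v_1$ distinct, and in the first sub-case of Part (3) where $l<u_0=v_0<u_1,v_1$ with $u_1\ne v_1$, this exhibits four distinct jumps, which must therefore be all of them (for Part (2) one may alternatively invoke Corollary \ref{cor.to.upperjumpslift} directly, since there $[LM:K]=p^4$).

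The delicate case is the second sub-case of Part (3) ($w_0=u_0=v_0$, no auxiliary $l$), where the claim is the sharper statement $G^{w_0+}=(\Z/p\Z)^2$. Here I would argue that $G^{u_0+}\subseteq\Phi(G)\cong(\Z/p\Z)^2$ (because $\Gal(E/K)^{u_0+}=1$ by Proposition \ref{p-cyclic-compositum}), while the quotient maps $G\to\Gal(L/K)$ and $G\to\Gal(M/K)$ carry $G^{u_0+}$ onto $\Gal(L/K)^{u_0+}$ and $\Gal(M/K)^{u_0+}$, each of order $p$; hence $G^{u_0+}$ is either all of $\Phi(G)$ or a diagonal copy of $\Z/p\Z$. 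The hard part is excluding the diagonal, and this is exactly where $u_1\ne v_1$ enters: a cyclic ramification group of order $p$ has a single jump, so a diagonal $G^{u_0+}$ could contribute only one jump above $u_0$, contradicting the fact that the two distinct values $u_1,v_1>u_0$ are both jumps of $G$. Once the diagonal is ruled out, $G^{u_0+}=(\Z/p\Z)^2$ admits a chain of length $2$, so there are exactly two further jumps, and reading off the projections to $\Gal(L/K)$ and $\Gal(M/K)$ identifies them as $u_1$ and $v_1$. I expect this diagonal-exclusion step, together with the care needed because upper numbering is not compatible with passage to subgroups, to be the main obstacle.
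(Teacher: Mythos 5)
Your proposal is correct, and its skeleton coincides with the paper's proof: part (1) by converting the lower jumps of Lemma \ref{p^2-cyclic-filtration} through Remark \ref{upper-lower-conversion}; for (2) and (3), passing to the quotient $G/(H_1\times H_2)=\Gal(K(a_0,b_0)/K)$ (your $E=L_1M_1$, with $H_1\times H_2$ equal to your Frattini subgroup $\Phi(G)$), computing its jumps by Proposition \ref{p-cyclic-compositum}, lifting jumps from the quotients $\Gal(L/K)$ and $\Gal(M/K)$ by Lemma \ref{upperjumpslift}, and counting jumps against $[LM:K]=p^4$. You deviate in two places. First, your ``workhorse'' (first jump of $G$ equals first jump of $G/\Phi(G)$, via the fact that maximal subgroups of a $p$-group contain $\Phi(G)$) replaces the paper's appeal to \cite[IV, 2, Corollary 3]{Serre-local.fields}: since $w_0$ is the first jump, $G/G^{w_0+}$ has exponent $p$, hence $G^{w_0+}\supseteq H_1\times H_2$, and $w_0<u_0$ would force $G^{w_0+}\supseteq G^{u_0}(H_1\times H_2)=G$, a contradiction; the two justifications are interchangeable. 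Second, and more substantively, in the last sub-case of (3) the paper never needs your Goursat/diagonal-exclusion step: it holds both inclusions $G^{u_0+}\supseteq H_1\times H_2$ (from Serre's corollary) and $G^{u_0+}\subseteq H_1\times H_2$ (from $(G/(H_1\times H_2))^{u_0+}=1$), so $G^{w_0+}=H_1\times H_2=(\Z/p\Z)^2$ falls out as a sandwich, valid even without $u_1\ne v_1$; that hypothesis enters only afterwards, to identify the two remaining jumps as $u_1$ and $v_1$. Your route carries only the upper inclusion, so you must recover the lower one by projecting $G^{u_0+}$ onto $\Gal(L/K)$ and $\Gal(M/K)$ and ruling out the graph subgroups using the two distinct jumps $u_1\ne v_1$ above $u_0$; this is correct, just longer, and the exponent-$p$ inclusion is exactly the labor-saving device you anticipated lacking. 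Finally, a remark on (1): carrying out your (equally the paper's) conversion with $s_1=1$, $s_2=p$ yields $u_1=p\,u_0=-v_K(\alpha_0)\,p$ when $-v_K(\alpha_1)\le -v_K(\alpha_0)\,p$, and $u_1=-v_K(\alpha_1)$ otherwise; the formulas printed in the statement, namely $-v_K(\alpha_0)(p-1)$ and $v_K(\alpha_1)$, are off by typos, so your method is right and in fact corrects them.
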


 \begin{proof}
  Statement (1) is a direct consequence of Lemma \ref{p^2-cyclic-filtration} and Remark \ref{upper-lower-conversion}. Without loss of generality, we may assume $u_0\le v_0$.
 
  Note that $G=\Gal(L/K)\times \Gal(M/K)$. Let $H_1$ and $H_2$ be the $p$-cyclic subgroups of $\Gal(L/K)$ and $\Gal(M/K)$ respectively. Let $(a_0,a_1)\in W_2(L)$ and $(b_0,b_1)\in W_2(M)$ be such that $(a_0^p,a_1^p)-_w(a_0,a_1)=(\alpha_0,\alpha_1)$ and $(b_0^p,b_1^p)-_w(b_0,b_1)=(\beta_0,\beta_1)$. Note that $L^{H_1}=K(a_0)$ and $M^{H_2}=K(b_0)$. 
 
  If $u_0\ne v_0$ then the upper jumps of $\Gal(K(a_0,b_0)/K)=G/(H_1\times H_2)$ are $u_0,v_0$ by Proposition \ref{p-cyclic-compositum}. By Lemma \ref{upperjumpslift}, $u_0$ is an upper jump of $G$ which implies $w_0\le u_0$. Also since $u_0$ is the first upper jump of $G/(H_1\times H_2)$ and $(G/(H_1\times H_2))^{u_0+}\ne 1$, we have 
  $$G^{u_0}(H_1\times H_2)=G \text{ and }$$ 
  $$H_1\times H_2 \subsetneq G^{u_0+}(H_1\times H_2) \subsetneq G.$$ 
  Since $w_0$ is the first upper jump of $G$, by \cite[IV, 2, Corollary 3]{Serre-local.fields} $G/G^{w_0+}$ is a group of exponent $p$. But $G=(\Z/p^2\Z)^2$ so $G^{w_0+}\supset H_1\times H_2$. If $w_0< u_0$ then $G^{w_0+}\supset G^{u_0}$. But this implies $G^{w_0+}=G$ which contradicts that $w_0$ is an upper jump of $G$. So $w_0=u_0$. The moreover part of the statement (2) follows from Corollary \ref{cor.to.upperjumpslift}.
 
  For statement (3), we note that by Proposition \ref{p-cyclic-compositum}, the upper jumps of $G/(H_1\times H_2)=\Gal(K(a_0,b_0)/K)$ are $l, u_0$ if there exist $c\in \FF_p$ and $x\in K$ such that $l=-v_K(\alpha_0+c\beta_0+x^p-x)<u_0$. So by Lemma \ref{upperjumpslift}, $l, u_0, u_1 ,v_1$ are upper jumps of $G$. Also since the upper jumps of $G/(H_1\times H_2)$ are $l, u_0$, we are in the previous setup. Hence $w_0=l$ and moreover if $u_1\ne v_1$ then $l, u_0, u_1,v_1$ are all distinct. So by Corollary \ref{cor.to.upperjumpslift} these are all the upper jumps of $G$.
 
  In the case where no such $c$ and $x$ exist, again by Proposition \ref{p-cyclic-compositum}, the only upper jump of $G/(H_1\times H_2)$ is $u_0$. So $(G/(H_1\times H_2))^{u_0+}=1$ and $(G/(H_1\times H_2))^{u_0}=G/H_1\times H_2$. But this is equivalent to $G^{u_0+}\subset H_1\times H_2$ and $G^{u_0}(H_1\times H_2)=G$. Again $G^{w_0+}\supset H_1\times H_2$ and if $w_0< u_0$ then $G^{w_0+} \supset G^{u_0}$ which would imply $G^{w_0+}=G$ contradicting that $w_0$ is an upper jump. Hence $w_0=u_0$.
  Also $G^{w_0+}$ has index at most $p^2$ in $G$. Moreover $u_1$ and $v_1$ are both greater than $w_0$ and they are upper jumps of $G$. Hence $G^{w_0+}=H_1\times H_2$ is exactly of index $p^2$ and $u_0, u_1, v_1$ are the only upper jumps of $G$. 
 \end{proof}

 \begin{pro}\label{filtration.not.linearly.disjoint}
  Suppose $L$ and $M$ are not linearly disjoint over $K$. Then $L\cap M$ is a $p$-cyclic extension of $K$ and $G=\Gal(LM/K) \cong \Z/p^2\Z\times \Z/p\Z$. Let $u_0,u_1$ and $v_0,v_1$ be the upper jumps of $L/K$ and $M/K$ respectively. The following holds:
  \begin{enumerate}
   \item $L\cap M=K(a_0)=K(b_0)$ and $u_0=v_0$ is the upper jump of $\Gal(L\cap M/K)$.
   \item If $u_1\ne v_1$ then $u_0,u_1, v_1$ are the upper jumps of $G$.
   \item If $u_1=v_1$, suppose $\alpha_0=c\beta_0$ and $v_K(\alpha_1-c\beta_1)$ is different from $-u_0$ and $-u_1$ for some nonzero $c\in \FF_p$, then the upper jumps of $G$ are $u_0,u_1=v_1, -v_K(\alpha_1-c\beta_1)$.
  \end{enumerate}
 \end{pro}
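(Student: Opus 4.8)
The plan is to settle the group structure and the intersection first, read off (1) and (2) from the machinery already in place, and then manufacture the extra jump in (3) by an explicit Witt difference. First I would record the structural claim. Since $\Gal(L/K)\cong\Gal(M/K)\cong\Z/p^2\Z$ are cyclic, each has a unique proper nontrivial subextension, namely its $p$-cyclic part $K(a_0)$, respectively $K(b_0)$, attached to the first Witt coordinate. As $L\neq M$ while $L\cap M\neq K$ (they are not linearly disjoint), $L\cap M$ can only be this common $p$-cyclic piece, so $L\cap M=K(a_0)=K(b_0)$ is $p$-cyclic over $K$. The residue field being algebraically closed, all extensions are totally ramified and the decomposition group is the whole Galois group; the proposition computing the decomposition group of a compositum as a fibre product (via \cite[Lemma 3.1]{wild.ram}) then gives $G\cong\Gal(L/K)\times_{\Gal(L\cap M/K)}\Gal(M/K)=\Z/p^2\Z\times_{\Z/p\Z}\Z/p^2\Z$. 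Writing this as $\{(a,b):a\equiv b\ (\mathrm{mod}\ p)\}$, the elements $(1,1)$ (of order $p^2$) and $(0,p)$ (of order $p$) generate it with trivial intersection, so $G\cong\Z/p^2\Z\times\Z/p\Z$. For (1) it remains to note that the upper jump of the $p$-cyclic extension $L\cap M/K$ is $-v_K$ of its reduced Artin-Schrier element; this equals $u_0=-v_K(\alpha_0)$ computed inside $L$ and $v_0=-v_K(\beta_0)$ computed inside $M$, whence $u_0=v_0$ is that jump. For (2), $L/K$ and $M/K$ have upper jumps $\{u_0,u_1\}$ and $\{v_0,v_1\}=\{u_0,v_1\}$; if $u_1\neq v_1$ these are three distinct numbers $U\cup V=\{u_0,u_1,v_1\}$, and since $[LM:K]=p^3$ with $|U\cup V|=3$, Corollary \ref{cor.to.upperjumpslift} applies verbatim and yields that $u_0,u_1,v_1$ are all the upper jumps of $G$.

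The substance is (3), where $u_1=v_1$ collapses $U\cup V$ to $\{u_0,u_1\}$, so Corollary \ref{cor.to.upperjumpslift} no longer suffices and I must produce a subextension of $LM$ realizing the third jump. Using $\alpha_0=c\beta_0$, I would form the Teichm\"uller lift $[c]=(c,0)\in W_2(\FF_p)$ and the Witt difference $\eta:=(\alpha_0,\alpha_1)-_w[c]\cdot(\beta_0,\beta_1)$. Since $[c]\cdot(\beta_0,\beta_1)=(c\beta_0,c\beta_1)=(\alpha_0,c\beta_1)$ and multiplication by the unit $[c]$ (with $c\in\FF_p^\times$) does not change the fixed field, the extension attached to $[c]\cdot(\beta_0,\beta_1)$ is $M$, so the extension $N/K$ attached to $\eta$ lies in $LM$. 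The two vectors being subtracted share the first coordinate $\alpha_0$; writing subtraction as addition of the additive inverse, the relevant carry is $S_1(\alpha_0,-\alpha_0)=\tfrac1p(\alpha_0^p+(-\alpha_0)^p)=0$ for odd $p$ (equivalently, the bracket in \eqref{witt.substraction} vanishes when the first coordinates agree), so $\eta=(0,\alpha_1-c\beta_1)$. A length-two Witt vector with zero first coordinate defines an Artin-Schrier extension, so $N=K(\gamma)$ with $\gamma^p-\gamma=\alpha_1-c\beta_1$, and by Lemma \ref{AS-upper} its unique upper jump is $w:=-v_K(\alpha_1-c\beta_1)$. Because $w\notin\{u_0,u_1\}$ and $K(a_0)$ is the only $p$-cyclic subextension of $L$, we get $N\not\subseteq L$, hence $L\cap N=K$ and $LN=LM$. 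Applying Corollary \ref{cor.to.upperjumpslift} to $L$ (jumps $u_0,u_1$) and $N$ (jump $w$), now with three distinct values and $[LN:K]=p^3$, gives that $u_0,u_1,w=-v_K(\alpha_1-c\beta_1)$ are precisely the upper jumps of $G$.

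The step I expect to be the main obstacle is the middle of (3): controlling the second coordinate of $\eta$ and reading off the jump of $N$. The vanishing of the carry when the first coordinates coincide is exactly what makes $N$ genuinely $p$-cyclic and isolates $\alpha_1-c\beta_1$; and for its jump to equal $-v_K(\alpha_1-c\beta_1)$ through Lemma \ref{AS-upper} one needs $\alpha_1-c\beta_1$ to already be reduced, i.e. $v_K(\alpha_1-c\beta_1)$ prime to $p$, a point that must be verified. This is also where the hypothesis separating $v_K(\alpha_1-c\beta_1)$ from $-u_0$ and $-u_1$ does its work, both to force $N\not\subseteq L$ and to guarantee the three jumps are distinct. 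Everything else is bookkeeping with the fibre-product description and Corollary \ref{cor.to.upperjumpslift}.
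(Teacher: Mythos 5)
Your proposal is correct and follows essentially the same route as the paper: parts (1) and (2) are identical, and in (3) your Witt difference $\eta=(\alpha_0,\alpha_1)-_w[c]\cdot(\beta_0,\beta_1)=(0,\alpha_1-c\beta_1)$ is exactly the paper's computation that $(a_1-cb_1)^p-(a_1-cb_1)=\alpha_1-c\beta_1$, just carried out downstairs in $W_2(K)$ via the vanishing carry instead of on the elements $a_1,b_1$ upstairs, followed by the same appeal to Lemma \ref{AS-upper} and Corollary \ref{cor.to.upperjumpslift}. The reducedness caveat you flag (that $v_K(\alpha_1-c\beta_1)$ must be prime to $p$ for Lemma \ref{AS-upper} to yield the jump $-v_K(\alpha_1-c\beta_1)$ on the nose) is left equally implicit in the paper's own proof, so your treatment is, if anything, slightly more careful on that point.
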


 \begin{proof}
  Note that $G=\Gal(LM/K)=\Gal(L/K)\times_{\Gal(L\cap M/K} \Gal(M/K)\cong \Z/p^2\Z\times \Z/p\Z$.
  Since $L$ and $M$ are distinct and not linearly disjoint, $L\cap M/K$ is a $p$-cyclic extension. Note that $K(a_0)$ and $K(b_0)$ are the unique $p$-cyclic subextensions of $K$ contained in $L$ and $M$ respectively. So $L\cap M=K(a_0)=K(b_0)$ and $u_0=v_0$. Moreover the only upper jump of $K(a_0)/K$ is $u_0$. This proves (1).
 
  For (2), we note that $u_0, u_1, v_1$ are upper jumps of $G$ by Lemma \ref{upperjumpslift}. Since $u_1\ne v_1$, $u_0, u_1, v_1$ are all distinct. So by Corollary \ref{cor.to.upperjumpslift}, $u_0, u_1, v_1$ are the only upper jumps of $G$.
 
  Finally for (3), let $(a_0^p,a_1^p)-_w(a_0,a_1)=(\alpha_0,\alpha_1)$ and $(b_0^p,b_1^p)-_w(b_0,b_1)=(\beta_0,\beta_1)$. Since $\alpha_0=c\beta_0$ for some $c\in \FF_p$, we obtain that $a_0=cb_0+d$ for some $d\in \FF_p$. Since $(d_0^p,d_1^p)-_w(d_0,d_1)=(0,0)$ for $d_0,d_1\in \FF_p$, we can modify $(b_0,b_1)$ if necessary to further assume that $d=0$, i.e., $a_0=cb_0$. By the subtraction formula \ref{witt.substraction} of Witt vectors, we note that
  \begin{align*}
   a_1^p-a_1&= \alpha_1+[-a_0^{p(p-1)}a_0+\frac{(p-1)}{2}a_0^{p(p-2)}a_0^2-\ldots+\frac{(p-1)(p-2)\ldots 2}{2\cdot 3\ldots (p-1)}a_0^pa_0^{p-1}] \\
           &= \alpha_1+c[-b_0^{p(p-1)}b_0+\frac{(p-1)}{2}b_0^{p(p-2)}b_0^2-\ldots+\frac{(p-1)(p-2)\ldots 2}{2\cdot 3\ldots (p-1)}b_0^pb_0^{p-1}]\\
           &=\alpha_1-c\beta_1+c(\beta_1 + [-b_0^{p(p-1)}b_0+\ldots+\frac{(p-1)(p-2)\ldots 2}{2\cdot 3\ldots (p-1)}b_0^pb_0^{p-1}])\\
           &=\alpha_1-c\beta_1+c(b_1^p-b_1)
  \end{align*}

  Hence we obtain that $(a_1- cb_1)^p-(a_1-cb_1)=\alpha_1-c\beta_1$. So $K(a_1-cb_1)/K$ is a $p$-cyclic extension of $K$. Therefore by Lemma \ref{AS-upper}, $-v_K(\alpha_1-c\beta_1)$ is an upper jump of $G$. The hypothesis that $-v_K(\alpha_1-c\beta_1)$ is different from $u_0$ and $u_1$ implies that the only upper jumps of $G$ are $u_0, u_1$ and $-v_K(\alpha_1-c\beta_1)$ (Corollary \ref{cor.to.upperjumpslift}).
 \end{proof}

\section{Global applications}

 For a $p$-cyclic cover $X\to \PP^1_x$ branched at one point, the genus of $X$ is easy to compute and has a well known formula. Any such cover is given by an Artin-Schrier extension $Z^p-Z-f$ where $f\in k[x]$ is a polynomial of degree $r$ coprime to $p$. The genus of $X$ is $(p-1)(r-1)/2$. This is calculated using Hilbert's different formula, Riemann-Hurwitz formula and explicitly determining the higher ramification filtration at the totally ramified point of $X$ above $\infty$. 

 We can now carry out this computation for Galois covers of $\PP^1$ of degree $p^2$ branched only at one point with the aid of Proposition \ref{p-cyclic-compositum} and Lemma \ref{p^2-cyclic-filtration}. From now on the base field $k$ is also assumed to be algebraically closed.

 \begin{thm}\label{genus-p^2-noncyclic-covers}
  Let $\Phi:X\to \PP^1_x$ be a $(\Z/p\Z)^2$-cover of $\PP^1$ branched only at $\infty$. At the point $x=\infty$, after passing to completion, this cover induces a $(\Z/p\Z)^2$-Galois extension of local fields $L/k((x^{-1}))$ where $L=\QF(\hat \cO_{X,\Phi^{-1}(\infty)})$. Let $\alpha_0, \alpha_1\in k((x^{-1}))$ be such that $L$ is the compositum of distinct Artin-Schrier extensions $Z^p-Z-\alpha_0$ and $Z^p-Z-\alpha_1$ and $n_i=-v(\alpha_i)$ are coprime to $p$ for $i=0,1$. Here $v$ is the valuation of of the local field $k((x^{-1}))$. Then the genus of $X$ is $\frac{1}{2}[(n_1-1)p^2-(n_1-n_0)p -n_0 +1]$ if $n_0 < n_1$. If $n_0=n_1$ and $-v(\alpha_0+a\alpha_1+y^p-y)\ge n_0$ for all $a\in \FF_p$ and $y\in k((x^{-1}))$ then the genus is $\frac{1}{2}[(n_0-1)p^2 -n_0 +1]$. If $n_0=n_1$ and for some $a\in \FF_p$ and $y\in k((x^{-1}))$, $n'_0=-v(\alpha_0+a\alpha_1+y^p-y)< n_0$ then the genus is $\frac{1}{2}[(n'_0-1)p^2-(n_0-n'_0)p -n'_0 +1]$.
 \end{thm}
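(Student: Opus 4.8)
The plan is to compute the genus via the Riemann-Hurwitz formula, which for a cover $\Phi:X\to\PP^1$ of degree $p^2$ totally ramified at a single point reads $2g_X-2=p^2(2\cdot 0-2)+\deg(\mathfrak{d})$, where $\mathfrak{d}$ is the different, supported only at the point of $X$ above $\infty$. Thus the entire problem reduces to computing $\deg(\mathfrak{d})$, and for that I would invoke Hilbert's different formula
\[
 \deg(\mathfrak{d})=\sum_{i\ge 0}(\lvert G_i\rvert-1),
\]
where $G=\Gal(L/k((x^{-1})))\cong(\Z/p\Z)^2$ and the $G_i$ are the lower-numbering ramification groups at the point above $\infty$. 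So the heart of the matter is to read off the lower jumps of $G$ from the data $n_0,n_1$, and this is precisely what Proposition \ref{p-cyclic-compositum} furnishes in upper numbering.

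The key computational step is therefore to pass from the upper jumps supplied by Proposition \ref{p-cyclic-compositum} to the lower jumps needed by Hilbert's formula, using the conversion in Remark \ref{upper-lower-conversion}. I would organize this according to the three cases of the theorem, which mirror the three cases of the proposition. In the first case $n_0<n_1$, the proposition gives upper jumps $u_1=n_0$ and $u_2=n_1$ with $s_1=1$, $s_2=p$ (since $G^{u_1}=G$ has order $p^2$ and $G^{u_2}\cong\Z/p\Z$ has order $p$). Remark \ref{upper-lower-conversion} then yields lower jumps $l_1=n_0$ and $l_2=n_0+p(n_1-n_0)$. Hilbert's formula gives $\deg(\mathfrak{d})=(p^2-1)(l_1+1)+(p-1)(l_2-l_1)$, and substituting and simplifying into $2g-2=-2p^2+\deg(\mathfrak{d})$ should reproduce $g=\frac12[(n_1-1)p^2-(n_1-n_0)p-n_0+1]$. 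In the case $n_0=n_1$ with no reduction available, the proposition gives a single upper jump at $n_0$ with $G^{n_0}=G$, so $G$ is elementary abelian with $G_i$ either all of $G$ or trivial; here $l=n_0$ and $\deg(\mathfrak{d})=(p^2-1)(n_0+1)$, giving the stated $g=\frac12[(n_0-1)p^2-n_0+1]$. The third case, where a combination drops the valuation to $n'_0<n_0$, is structurally identical to the first case with $(n_0,n_1)$ replaced by $(n'_0,n_0)$, since after the change of generator we are composing a $p$-cyclic extension with upper jump $n'_0$ and one with upper jump $n_0$.

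I expect the main obstacle to be bookkeeping rather than conceptual difficulty: one must be careful about the exact relationship between the off-by-one conventions in the definition of $G_i$ (the $i+1$ shift) and the lower jumps $l_j$ as defined in the excerpt, and correspondingly whether Hilbert's formula is being summed as $\sum(\lvert G_i\rvert-1)$ starting at $i=0$. The cleanest way to avoid errors is to write $\deg(\mathfrak{d})=\sum_{i\ge 0}(\lvert G_i\rvert-1)$ as a telescoping sum over the lower jumps, namely $\sum_j(\lvert G_{l_j}\rvert-1)(l_j-l_{j-1})$ with $l_0=0$ and the understanding that $G_i=G_{l_j}$ for $l_{j-1}<i\le l_j$, and then substitute the explicit jumps from Remark \ref{upper-lower-conversion}. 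A secondary check worth performing is consistency with the known genus formula $(p-1)(r-1)/2$ for a single Artin-Schrier cover, recovered by specializing to the $p$-cyclic quotient; this guards against a misplaced factor. The remaining work is the purely algebraic simplification of the resulting polynomial expressions in $n_0,n_1,p$ into the advertised closed forms.
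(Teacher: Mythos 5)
Your proposal follows exactly the paper's proof: Riemann--Hurwitz plus Hilbert's different formula, with the lower jumps read off from Proposition \ref{p-cyclic-compositum} via Remark \ref{upper-lower-conversion}, and your case-1 computation $(p^2-1)(l_1+1)+(p-1)(l_2-l_1)$ agrees with the paper's $(n_0+1)(p^2-1)+(n_1-n_0)p(p-1)$, with the other two cases handled the same way. One caution: your closing ``telescoping'' restatement $\sum_j(|G_{l_j}|-1)(l_j-l_{j-1})$ with $l_0=0$ drops the $i=0$ term of $\sum_{i\ge 0}(|G_i|-1)$ and is therefore off by $|G|-1$; your earlier formula with the $(l_1+1)$ factor is the correct bookkeeping, precisely the off-by-one issue you flagged.
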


 \begin{proof}
  By Riemann-Hurwitz formula the genus of $X$ is $\frac{1}{2}[p^2(-2)+\degree(R)+2]$, where $R$ is the ramification divisor. By Hilbert's different formula and the fact that there is only one ramified point in $X$, $\degree(R)=\Sigma_{i\ge 0} (|I_i|-1)$, where $I_i$ are the lower filtration of the inertia group at the ramified point of $X$. 
  
  Proposition \ref{p-cyclic-compositum} gives the upper ramification filtration on the inertia group. Using Remark \ref{upper-lower-conversion}, we obtain the lower ramification filtration. The result follows from a straight forward arithmetic calculation. Let us perform this calculation in the case $n_0<n_1$.
  The upper jumps in this case are $n_0$ and $n_1$. So the lower jumps are $n_0$ and $(n_1-n_0)p+n_0$ by Remark \ref{upper-lower-conversion}. So by Hilbert's different formula $\degree(R)=(n_0+1)(p^2-1)+(n_1-n_0)p(p-1)$. So the genus of $X$ is $\frac{1}{2}[-2p^2+n_0p^2+p^2-n_0-1+n_1p^2-n_1p-n_0p^2+n_0p+2]$. Simplify the expression to obtain the result.
 \end{proof}

 \begin{thm}\label{genus-p^2-cyclic-covers}
  Let $\Phi:X\to \PP^1_x$ be a $(\Z/p^2\Z)$-cover of $\PP^1$ branched only at $\infty$. At the point $x=\infty$, after passing to completion, this cover induces a $(\Z/p\Z)^2$-Galois extension of local fields $L/k((x^{-1}))$ where $L=\QF(\hat \cO_{X,\Phi^{-1}(\infty)})$. Let $\alpha_0, \alpha_1\in k((x^{-1}))$ be such that $L$ is the Artin-Schrier-Witt extension corresponding to the Witt vector $(\alpha_0,\alpha_1)$ and $n_i=-v(\alpha_i)$ are coprime to $p$ for $i=1,2$. Here $v$ is the valuation of the local field $k((x^{-1}))$. Then the genus of $X$ is $\frac{1}{2}[n_0(p-1)(p^2+1)-p^2+1]$ if $n_1 \le pn_0$ and $\frac{1}{2}[(n_1-1)p^2-(n_1-n_0)p -n_0 +1]$ otherwise.
 \end{thm}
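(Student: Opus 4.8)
The plan is to follow the proof of Theorem~\ref{genus-p^2-noncyclic-covers} almost verbatim, the only change being that the ramification data now comes from Lemma~\ref{p^2-cyclic-filtration} rather than from Proposition~\ref{p-cyclic-compositum}. As before, $\Phi$ is a degree $p^2$ cover of the genus zero curve $\PP^1$, so Riemann-Hurwitz gives $g(X)=\frac{1}{2}[-2p^2+\degree(R)+2]$, where $R$ is the ramification divisor. Since $\Phi$ is branched only over $\infty$ and the induced local extension $L/k((x^{-1}))$ is totally ramified with inertia group the full $I=\Z/p^2\Z$, there is a unique point of $X$ over $\infty$, and Hilbert's different formula reduces to $\degree(R)=\sum_{i\ge 0}(|I_i|-1)$, where the $I_i$ are the lower ramification groups of $I$.

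The essential simplification over the noncyclic case is that Lemma~\ref{p^2-cyclic-filtration} outputs the \emph{lower} jumps directly, so no passage through Remark~\ref{upper-lower-conversion} is needed. Writing $l_1=n_0$ for the first lower jump and $l_2$ for the second, the lemma gives $l_2=n_0(p^2-p+1)$ when $n_1\le pn_0$ and $l_2=p(n_1-n_0)+n_0$ otherwise. Because $I\cong\Z/p^2\Z$ has $|I_i|=p^2$ for $0\le i\le l_1$, then $|I_i|=p$ for $l_1<i\le l_2$, and $|I_i|=1$ for $i>l_2$, summing the contributions yields
\[
\degree(R)=(l_1+1)(p^2-1)+(l_2-l_1)(p-1).
\]

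It then remains to substitute and simplify in each case. When $n_1\le pn_0$ one has $l_2-l_1=n_0p(p-1)$, so $\degree(R)=(n_0+1)(p^2-1)+n_0p(p-1)^2$, and expanding Riemann-Hurwitz gives the claimed $\frac{1}{2}[n_0(p-1)(p^2+1)-p^2+1]$. When $n_1>pn_0$ one has $l_2-l_1=p(n_1-n_0)$, which reproduces exactly the degree expression appearing in Theorem~\ref{genus-p^2-noncyclic-covers} for the range $n_0<n_1$, and hence the same genus $\frac{1}{2}[(n_1-1)p^2-(n_1-n_0)p-n_0+1]$. There is no serious obstacle here: once Lemma~\ref{p^2-cyclic-filtration} is in hand the argument is a routine arithmetic expansion. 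The only point demanding a little care is confirming that the inertia at the point over $\infty$ is the entire group $\Z/p^2\Z$, so that the single-point form of Hilbert's formula applies and the order counts $|I_i|$ are exactly as stated.
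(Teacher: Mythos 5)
Your proposal is correct and follows the same route as the paper: Riemann--Hurwitz plus Hilbert's different formula at the unique totally ramified point, with the lower jumps supplied by Lemma~\ref{p^2-cyclic-filtration} in place of Proposition~\ref{p-cyclic-compositum}; the arithmetic in both cases checks out. Your observation that the lemma gives lower jumps directly (so Remark~\ref{upper-lower-conversion} is not needed here) is a correct and welcome clarification of the paper's terse ``the proof is the same as above.''
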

 \begin{proof}
  The proof is the same as above. The only difference is that Proposition \ref{p^2-cyclic-filtration} is used instead of Proposition \ref{p-cyclic-compositum}. 
 \end{proof}

\subsection{Applications to Abhyankar's Inertia conjecture}
 Let $G$ be quasi-$p$ group and $I\le G$ be a subgroup. Recall that we say $(G,I)$ is realizable if there exist a $G$-Galois cover $X\to \PP^1$ branched only at one point $\infty$ and the inertia group at a point of $X$ above $\infty$ is $I$.

 \begin{thm}\label{AIC.known}
  Suppose $(G,I)$ is realizable and let $P$ be a $p$-group then 
  \begin{enumerate}
   \item $(G\times P, I\times P)$ is realizable.
   \item $(G,Q)$ is realizable where $Q$ is any $p$-subgroup of $G$ containing $I_2$ if there is no epimorphism from $G$ to any nontrivial quotient of $I_2$.
  \end{enumerate}
 \end{thm}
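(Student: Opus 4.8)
The plan is to realize both covers by gluing the given $G$-cover to an auxiliary local datum at $\infty$, in the spirit of Harbater's constructions in \cite{Harbater-moduli.of.pcovers} together with Katz--Gabber \cite{Katz-Gabber}. Throughout, fix a $G$-Galois cover $\phi\colon Y\to\PP^1$ branched only at $\infty$ realizing $(G,I)$, with inertia $I$ at a chosen point $y_0$ over $\infty$; recall that the residue fields in play are algebraically closed, so decomposition groups at $\infty$ coincide with inertia groups.

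For part (1), I would build a second cover $\psi\colon Z\to\PP^1$ that is $P$-Galois, branched only at $\infty$, and totally ramified over $\infty$ (so the inertia there is all of $P$); such a $Z$ exists because $P$ is a $p$-group, and from the Artin--Schreier--Witt description one may moreover arrange every upper ramification break of $Z/\PP^1$ to exceed every upper break of $Y/\PP^1$. Form the normalized fibre product $W=Y\times_{\PP^1}Z$. The first step is to check that $k(Y)$ and $k(Z)$ are linearly disjoint over $k(x)$, so that $W\to\PP^1$ is genuinely $(G\times P)$-Galois: any common Galois subextension is a cover of $\PP^1$ branched only at $\infty$, hence (as $\PP^1_k$ admits no nontrivial unramified cover) is ramified at $\infty$ with a nontrivial upper break; by Lemma \ref{upperjumpslift}(1) such a break would be simultaneously a break of $Y/\PP^1$ and of $Z/\PP^1$, contradicting the disjointness of breaks. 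Granting linear disjointness, the inertia of $W$ over $\infty$ is computed by the proposition on decomposition subgroups of a compositum (Section 2): locally $\hat L\cap\hat M=\hat K$, so the fibre product of inertia groups is the full product $I\times P$, realizing $(G\times P,\,I\times P)$.

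For part (2), I would argue by formal patching. Write $I=\Z/n\Z\rtimes I_1$ with $I_1$ the wild inertia and $I_2\subseteq I_1$ the second lower ramification group. The idea is to keep the original cover over $\PP^1\setminus\{\infty\}$, to retain only the deep ramification $I_2$ of the local cover at $\infty$, and to reassemble the cover above the $I_2$-level so that the new inertia at $\infty$ becomes the prescribed $p$-group $Q\supseteq I_2$. Concretely one constructs a $G$-Galois cover of the formal disk $D_\infty$ of the form $\Ind_Q^G$ of a totally ramified $Q$-cover whose $I_2$-subcover agrees with that of $Y$ on the boundary annulus, and patches it to $Y|_{\PP^1\setminus\{\infty\}}$ along the overlap. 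Once a base case with inertia $I_2$ is in hand, the enlargement to an arbitrary $p$-subgroup $Q\supseteq I_2$ is furnished by Harbater's containment theorem recalled in the introduction.

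The hard part is ensuring that the patched object is a connected cover with global monodromy exactly $G$, rather than a disconnected cover or one with strictly smaller group. This is precisely where the hypothesis enters: if the glued cover were to split, the splitting would produce an epimorphism from $G$ onto a nontrivial quotient of $I_2$ (the obstruction being concentrated in the shared $I_2$-datum), so the assumption that no such epimorphism exists forces connectedness and pins the inertia at $\infty$ to be exactly $Q$. Translating this connectedness criterion into the group-theoretic hypothesis — verifying that the only possible obstruction to transitivity of the patched $G$-set is an $I_2$-quotient of $G$ — is the step I expect to require the most care; by contrast, producing the compatible local $Q$-cover with prescribed deep ramification is routine given Lemma \ref{p^2-cyclic-filtration} and the compositum computations of Section 3.
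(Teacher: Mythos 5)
Your part (1) is correct but follows a genuinely different route from the paper's. The paper picks a local $P$-extension of $\QF(\hat\cO_{\PP^1,\infty})$ linearly disjoint from $\QF(\hat\cO_{X,r})$ --- possible because, by \cite{Harbater-moduli.of.pcovers}, there are infinitely many pairwise linearly disjoint $P$-extensions of $k((x^{-1}))$ --- takes its Harbater--Katz--Gabber cover, and reads off the inertia $I\times P$ of the normalized fibre product from \cite[Lemma 3.1]{wild.ram}. You instead manufacture disjointness by separating the upper ramification breaks and quoting Lemma \ref{upperjumpslift}(1), which gives both the global disjointness and the local statement $\hat L\cap\hat M=\hat K$ needed for the compositum proposition of Section 2. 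That is a valid and more self-contained argument. The one step you should not wave at is the claim that ``the Artin--Schreier--Witt description'' lets you push all upper breaks of a totally ramified $P$-cover above a given bound: ASW theory governs abelian $P$ only. For a general $p$-group, pull back any totally ramified $P$-extension of $k((u))$ along the tame extension $u=s^m$ with $(m,p)=1$, which multiplies every upper break by $m$, and then take the Harbater--Katz--Gabber cover of the resulting local extension; with that repair part (1) is sound.

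Part (2) has a genuine gap. You propose to keep $Y|_{\PP^1\setminus\{\infty\}}$ and re-glue, over the formal disk at $\infty$, a $G$-cover induced from a totally ramified $Q$-cover whose $I_2$-part agrees with that of $Y$ on the boundary. No patching of this shape can exist, for a structural reason: a normal finite cover of $\PP^1$ that is \'etale outside $\infty$ is the normalization of $\PP^1$ in its restriction to $\Aff^1$ (likewise a cover of the formal disk branched only at the closed point is the normalization of the disk in its restriction to $\spec k((x^{-1}))$). Hence the disk part of the cover --- in particular the inertia at $\infty$ --- is already completely determined by the part you intend to keep. Formal patching requires the \emph{full} $G$-covers to be isomorphic on the overlap, and if they are, the glued cover is $Y$ itself; agreement of the $I_2$-subcovers alone is not a gluing datum. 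So freezing the cover away from $\infty$ makes it impossible to change the inertia from $I$ to $I_2$ or to $Q$: any correct argument must modify the cover globally. Your further claim that the connectedness obstruction ``is'' an epimorphism from $G$ onto a nontrivial quotient of $I_2$ is asserted, not proved, and you flag it yourself as the hard step.

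For comparison, the paper does not reprove any of this: it cites \cite[Theorem 3.7]{wild.ram} --- the main theorem of ``Killing wild ramification'', whose hypothesis is exactly the no-epimorphism condition and whose conclusion realizes $(G,I_2)$; its proof modifies the cover globally, via normalized fibre products with auxiliary covers, in the same way as Theorems \ref{p-cyclic.main} and \ref{p^2-cyclic.main} of this paper use \cite[Proposition 3.5]{wild.ram} --- and then enlarges the inertia from $I_2$ to an arbitrary $p$-subgroup $Q\supseteq I_2$ by \cite[Theorem 2]{Harbater-adding.branch.points}. Your final enlargement step coincides with this second citation, but your replacement for the first one rests on a construction that cannot be carried out; either quote \cite[Theorem 3.7]{wild.ram} as the paper does, or be prepared to reproduce its fibre-product argument in full.
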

 
 \begin{proof}
  Since $(G,I)$ is realizable, there exist a $G$-cover $X\to \PP^1$ branched only at $\infty$ and the inertia group at a point $r\in X$ above $\infty$ is the subgroup $I$. This implies that the Galois group of the field extension $\QF(\hat \cO_{X,r})/\QF(\hat \cO_{\PP^1,\infty})$ is $I$. Since there are infinitely many linearly disjoint $P$-extension of $\QF(\hat \cO_{\PP^1,\infty})$ \cite{Harbater-moduli.of.pcovers}, there exist a $P$-extension $\hat L/\QF(\hat \cO_{\PP^1,\infty})$ linearly disjoint from $\QF(\hat \cO_{X,r})/\QF(\hat \cO_{\PP^1,\infty})$. Let $Y\to \PP^1$ be the Harbater-Katz-Gabber $P$-cover $Y\to \PP^1$ associated to the $P$-extension $\hat L/\QF(\hat \cO_{\PP^1,\infty})$. Note that $Y\to \PP^1$ is linearly disjoint to the cover $X\to \PP^1$. Letting $U$ to be the normalization of $X\times_{\PP^1} Y$ we note that $U \to \PP^1$ is a $G\times P$ cover branched only at $\infty$. Moreover, the linear disjointness of $\hat L$ and $\QF(\hat \cO_{X,r})$ over $\QF(\hat \cO_{\PP^1,\infty})$ implies that $\Gal(\hat L\QF(\hat \cO_{X,r})/\QF(\hat \cO_{\PP^1,\infty}))=I\times P$. By \cite[Lemma 3.1]{wild.ram}, the inertia group at the point $(r,\infty)$ of the cover $U\to \PP^1$ is $I\times P$.
 
  The second statement is a consequence of \cite[Theorem 3.7]{wild.ram} and \cite[Theorem 2]{Harbater-adding.branch.points}.
 \end{proof}

 \begin{thm}\label{p-cyclic.main}
  Suppose there is no epimorphism from $G\to \Z/p\Z$. If $(G,P)$ is realizable where $P\le G$ is a $p$-group then there exist an index $p$-subgroup $Q$ of $P \times \Z/p\Z$ such that $(G\times \Z/p\Z, Q)$ is realizable. 
 \end{thm}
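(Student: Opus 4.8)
The plan is to realize $(G\times\Z/p\Z,Q)$ as a fiber product of the given $G$-cover with a carefully chosen $\Z/p\Z$-cover, and to read off the inertia group from the compositum--decomposition formula. Since $(G,P)$ is realizable, fix a $G$-Galois cover $\Phi:X\to\PP^1$ branched only at $\infty$ whose inertia group at a point $r$ above $\infty$ is $P$; because $k$ is algebraically closed the residue extension is trivial, so this is also the decomposition group, and writing $\hat K=\QF(\hat\cO_{\PP^1,\infty})$ and $\hat M=\QF(\hat\cO_{X,r})$ we have $\Gal(\hat M/\hat K)=P$. The cover is (nontrivially) branched at $\infty$, so $P\ne 1$, and hence $P$ admits an epimorphism $\bar\phi:P\twoheadrightarrow\Z/p\Z$ (project $P/\Phi(P)$ onto a factor). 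Let $\hat N\subset\hat M$ be the corresponding $\Z/p\Z$-subextension of $\hat K$, namely the fixed field of $\ker\bar\phi$.

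Next I would produce an independent global $\Z/p\Z$-cover inducing exactly $\hat N$. By the Harbater--Katz--Gabber theory \cite{Harbater-moduli.of.pcovers} there is a $\Z/p\Z$-Galois cover $\Psi:Y\to\PP^1$ branched only at $\infty$ with $\QF(\hat\cO_{Y,s})=\hat N$ at a point $s$ above $\infty$. Since there is no epimorphism $G\to\Z/p\Z$ and $\Z/p\Z$ is simple, $X\to\PP^1$ and $Y\to\PP^1$ share no nontrivial common subcover, hence are linearly disjoint over $\PP^1$; therefore the normalization $U$ of $X\times_{\PP^1}Y$ is a connected $(G\times\Z/p\Z)$-Galois cover of $\PP^1$ branched only at $\infty$.

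I would then compute the inertia group of $U$ at the point $(r,s)$ over $\infty$. By the compositum--decomposition result (\cite[Lemma 3.1]{wild.ram}) this decomposition group is $P\times_I\Z/p\Z\le P\times\Z/p\Z$, where $I=\Gal(\hat M\cap\hat N/\hat K)$. By construction $\hat N\subset\hat M$, so $\hat M\cap\hat N=\hat N$ and $I=\Z/p\Z$; the structure map $P\to I$ is $\bar\phi$ and the map $\Z/p\Z\to I$ is the identity. Thus the inertia group is $Q=\{(a,\bar\phi(a)):a\in P\}$, the graph of $\bar\phi$, which is isomorphic to $P$ and has index $p$ in $P\times\Z/p\Z$. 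Since $Q$ is a $p$-group and $G\times\Z/p\Z$ is quasi-$p$, this exhibits $(G\times\Z/p\Z,Q)$ as realizable with $Q$ of index $p$, as required.

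The step I expect to be the crux is reconciling two different disjointness properties at once. Global linear disjointness of $X$ and $Y$ over $k(x)$ is what makes $U$ a genuine $G\times\Z/p\Z$-cover, and this is exactly what the hypothesis ``no epimorphism $G\to\Z/p\Z$'' supplies: $G$ has no global $\Z/p\Z$-quotient, so $X$ carries no $\Z/p\Z$-subcover over $k(x)$. At the same time the completion $\hat M$ of $X$ at $\infty$ \emph{does} contain a $\Z/p\Z$-extension $\hat N$, which we realize by the independent cover $Y$, forcing the deliberate local coincidence $\hat N\subset\hat M$ and hence $I\ne 1$. It is this local coincidence that drops the inertia from $P\times\Z/p\Z$ to the index-$p$ subgroup $Q$. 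Keeping the global-versus-local distinction straight, and checking through the fiber-product formula that the inertia is precisely the graph of $\bar\phi$, is the heart of the argument.
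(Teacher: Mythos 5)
Your proof is correct, and it takes a genuinely different --- and shorter --- route than the paper's. The paper never uses the local subextension $\hat N=\hat M^{\ker\bar\phi}$ directly: instead it perturbs a reduced Artin--Schreier element $\beta$ of the degree-$p$ quotient of $\hat M/\hat K$ to $\beta'=\beta+cx$, takes the Harbater--Katz--Gabber cover $Y$ attached to $K(\alpha')$ with $\alpha'^p-\alpha'=\beta'$, so that the local extensions are \emph{linearly disjoint} and the normalization $U$ of $X\times_{\PP^1}Y$ has inertia equal to the full product $P\times\Z/p\Z$; it then pulls $U$ back along the auxiliary $p$-cyclic cover $V\to\PP^1_x$ determined by $\gamma=\alpha'-\alpha$ (so $\gamma^p-\gamma=cx$ and $V\cong\PP^1$) and invokes \cite[Proposition 3.5]{wild.ram} to cut the inertia down to $Q=\ker(\phi:P\times\Z/p\Z\to\Z/p\Z)$, the same graph-type subgroup you produce. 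You instead arrange the local coincidence at the outset: taking $Y$ to be the Harbater--Katz--Gabber cover of $\hat N$ itself, the hypothesis that $G$ has no $\Z/p\Z$-quotient gives global linear disjointness of $k(X)$ and $k(Y)$ (since $[k(Y):k(x)]=p$ is prime), so $U$ is a connected $G\times\Z/p\Z$-cover, while the nesting $\hat N\subset\hat M$ forces the inertia to be $P\times_{\Z/p\Z}\Z/p\Z$, the graph of $\bar\phi$, of index $p$; both proofs thus use the no-epimorphism hypothesis exactly once and for the same structural purpose (global disjointness despite a local coincidence), but yours needs no perturbation, no auxiliary cover $V$, and no appeal to \cite[Proposition 3.5]{wild.ram}. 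Two small points: the fiber-product description of the decomposition group that you attribute to \cite[Lemma 3.1]{wild.ram} is, precisely, the unnumbered Proposition at the end of Section 2 of the paper (that Lemma gives the identification with $\Gal(\hat M\hat N/\hat K)$; the Proposition, whose hypothesis of \emph{global} linear disjointness you have verified, converts it to the fiber product $I_1\times_I I_2$); and your method in fact also simplifies Theorem \ref{p^2-cyclic.main} --- taking $\hat N=\hat M^{\ker a}$ for the given $a:P\onto\Z/p^2\Z$ realizes the graph of $a$, which has index $p$ in $P\times_{\Z/p\Z}\Z/p^2\Z$ --- so the simplification is genuine and not an artifact of the $p$-cyclic case.
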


 \begin{proof}
  Since $(G,P)$ is realizable, there exist a $G$-cover $X\to \PP^1_x$ branched only at $x=\infty$ and the inertia group at a point $r\in X$ above $\infty$ is the subgroup $P$. So the Galois group of the field extension $\QF(\hat \cO_{X,r})/\QF(\hat \cO_{\PP^1,\infty})$ is $P$. Let $L=\QF(\hat \cO_{X,r})$ and $K=\QF(\hat \cO_{\PP^1,\infty})=k((x^{-1}))$.
 
  Let $P_1$ be an index $p$ (normal) subgroup of $P$. Then $L^{P_1}$ is an Artin-Schrier extension of $K$. Let $\alpha\in L$ be a reduced AS-element such that $L^{P_1}=K(\alpha)$ and $\beta=\alpha^p-\alpha$. Note that $\beta=c_nx^n+c_{n-1}x^{n-1}+\ldots c_1x+c_0 +c_{-1}x^{-1}+\ldots$ for some $n$ coprime to $p$, $c_n,c_{n-1},\ldots \in k$ and $c_n\ne 0$. Let $\beta'=\beta+cx$ for some nonzero $c\in k$ such that $Z^p-Z-\beta'$ is an irreducible polynomial in $L[Z]$. Let $\alpha'\in \bar L$ be such that $\alpha'^p-\alpha'=\beta'$. Then $L$ and $K(\alpha')$ are linearly disjoint over $K$. Let $Y\to \PP^1$ be the $p$-cyclic Harbater-Katz-Gabber cover associated to the extension of local fields $K(\alpha')/K$ and $\infty_Y$ be the point lying above $x=\infty$. Note that the covers $Y\to \PP^1_x$ and $X\to \PP^1_x$ are linearly disjoint because $k(Y)K=K(\alpha')$ and $k(X)K=L$.
  
  Let $U$ be the normalization of $X\times_{\PP^1_x} Y$. Then $U$ is smooth and irreducible. The cover $U\to \PP^1_x$ is a $G\times \Z/p\Z$ cover branched only at $x=\infty$. Moreover by \cite[Lemma 3.1]{wild.ram} the inertia group and the ramification filtration at the point $r'=(r, \infty_Y)\in U$ is given by the extension of local fields $L(\alpha')/K$. So the inertia group is $I=P\times \Z/p\Z$. Set $\gamma=\alpha'-\alpha\in L(\alpha')$. Then $\gamma^p-\gamma=cx$. Since $K(\gamma)\subset L(\alpha')$, there is an induced epimorphism on the Galois groups $\phi:I\to \Z/p\Z$. Let $Q=ker(\phi)$ then $Q$ is an index $p$ subgroup of $P\times \Z/p\Z$ and $L(\alpha')^Q=K(\gamma)$.
  
  We note that $k(x)(\gamma)$ and $k(U)$ are linearly disjoint over $k(x)$. To prove this, let us assume the contrary. Then $k(x)(\gamma)\subset k(U)=k(X)k(Y)$ which induces an epimorphism on Galois groups $\phi:G\times\Z/p\Z\to \Z/p\Z$. By assumption $\phi|_G$ is not surjective and hence trivial. Hence $G=ker(\phi)$ and by Galois theory $(k(X)k(Y))^G=k(x)(\gamma)$. But this implies $k(Y)=k(x)(\gamma)$ and hence $K(\alpha')=K(\gamma)$, a contradiction.
  
  Let $V\to \PP^1_x$ be the $p$-cyclic cover corresponding to the extension $k(x)(\gamma)/k(x)$ and $W$ be the normalization of $U\times_{\PP^1_x} V$. Let $\infty_V\in V$ be the point lying above $x=\infty$ and $r''=(r',\infty_V)$. By \cite[Proposition 3.5]{wild.ram}, the inertia group of the cover $W\to V$ at $r''$ is $Q$. Since $k(U)$ and $k(V)$ are linearly disjoint over $k(x)$, we get that $W$ is connected and $\Gal(k(W)/k(V))=\Gal(k(U)/k(x))=G\times\Z/p\Z$. Moreover, $W\to V$ is branched only at $\infty_V$.
  Finally, since $V$ is isomorphic to $\PP^1$, we get that $(G\times \Z/p\Z, Q)$ is realizable. 
 \end{proof}

 \begin{cor}
  Suppose $(G,I)$ is realizable and $P\le G$ be any $p$-group containing $I_2$. Also assume that there is no epimorphism from $G$ to $\Z/p\Z$. Then there exist an index $p$-subgroup $Q$ of $P \times \Z/p\Z$ such that $(G\times \Z/p\Z, Q)$ is realizable. 
 \end{cor}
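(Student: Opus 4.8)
The plan is to obtain this corollary by chaining together the two results proved just above: first use Theorem \ref{AIC.known}(2) to replace the inertia group $I$ by the larger $p$-group $P$, and then feed the resulting pair $(G,P)$ into Theorem \ref{p-cyclic.main}.

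First I would verify that the hypotheses of Theorem \ref{AIC.known}(2) are met. That result requires that there be no epimorphism from $G$ onto any nontrivial quotient of $I_2$, whereas the corollary only assumes there is no epimorphism $G\to\Z/p\Z$. The key observation bridging these is that $I_2$ is a $p$-group: since $I=\Z/n\Z\rtimes P$ with $(n,p)=1$, the wild inertia $I_1$ equals the Sylow $p$-subgroup $P$, and every higher ramification subgroup $I_i$ with $i\ge 1$ is a $p$-group, so in particular $I_2\le P$ is a $p$-group. Any nontrivial quotient of $I_2$ is therefore a nontrivial finite $p$-group, and hence admits $\Z/p\Z$ as a quotient (its Frattini quotient is a nontrivial elementary abelian $p$-group). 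Consequently, an epimorphism from $G$ onto a nontrivial quotient of $I_2$ would compose with a surjection onto $\Z/p\Z$ to give an epimorphism $G\to\Z/p\Z$, contradicting the hypothesis. Thus the no-epimorphism condition of Theorem \ref{AIC.known}(2) holds automatically.

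With this in hand, Theorem \ref{AIC.known}(2) applies to the given realizable pair $(G,I)$ and the $p$-subgroup $P\supseteq I_2$, yielding that $(G,P)$ is realizable. Now $P$ is a $p$-subgroup of $G$, the pair $(G,P)$ is realizable, and there is no epimorphism $G\to\Z/p\Z$ by assumption---exactly the hypotheses of Theorem \ref{p-cyclic.main}. Applying that theorem produces an index $p$-subgroup $Q$ of $P\times\Z/p\Z$ for which $(G\times\Z/p\Z, Q)$ is realizable, which is the desired conclusion.

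I expect the only non-formal step to be the group-theoretic reduction of the second paragraph, namely recognizing that the weaker hypothesis ``no epimorphism $G\to\Z/p\Z$'' already implies the stronger-looking hypothesis of Theorem \ref{AIC.known}(2); once this is observed, the corollary is a direct concatenation of the two preceding theorems and requires no further computation.
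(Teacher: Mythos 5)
Your proposal is correct and takes essentially the same route as the paper, whose entire proof is the one-line concatenation of Theorem \ref{AIC.known}(2) (to upgrade $(G,I)$ to $(G,P)$) with Theorem \ref{p-cyclic.main}. Your bridging observation---that $I_2$ is a $p$-group, so any epimorphism from $G$ onto a nontrivial quotient of $I_2$ would compose to give one onto $\Z/p\Z$, making the hypothesis of Theorem \ref{AIC.known}(2) automatic---is exactly the detail the paper leaves implicit, and it is correctly argued.
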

 \begin{proof}
  It follows from Theorem \ref{AIC.known} and Theorem \ref{p-cyclic.main}.
 \end{proof}

 \begin{thm}\label{p^2-cyclic.main}
  Suppose $(G,P)$ is realizable, there is no epimorphism from $G\to \Z/p\Z$ and $P$ has a $p^2$-cyclic quotient $a:P\to \Z/p^2\Z$. Then there exist an index $p$ subgroup $Q$ of $P\times_{\Z/p\Z} \Z/p^2\Z$ such that $(G\times \Z/p^2\Z, Q)$ is realizable.
 \end{thm}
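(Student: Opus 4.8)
The plan is to mimic the construction in the proof of Theorem~\ref{p-cyclic.main}, with one essential new twist: the auxiliary $\Z/p^2\Z$-extension I adjoin must meet $L$ \emph{locally} in a $\Z/p\Z$-layer (so that the compositum acquires inertia the fibre product $P\times_{\Z/p\Z}\Z/p^2\Z$) while remaining \emph{globally} linearly disjoint from $X$ (so that the Galois group is the full product $G\times\Z/p^2\Z$). First I would use realizability of $(G,P)$ to fix a $G$-cover $X\to\PP^1_x$ branched only at $\infty$ whose inertia at a point $r$ over $\infty$ is $P$; put $K=\QF(\hat\cO_{\PP^1,\infty})=k((x^{-1}))$ and $L=\QF(\hat\cO_{X,r})$, so $\Gal(L/K)=P$. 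The quotient $a\colon P\to\Z/p^2\Z$ determines the $\Z/p^2\Z$-subextension $M=L^{\ker a}$, and by Artin--Schrier--Witt theory together with Remark~\ref{reduced} I write $M=K(a_0,a_1)$ for a reduced $(\alpha_0,\alpha_1)\in W_2(K)$ with $\alpha_0,\alpha_1$ polynomials in $x$ of degree prime to $p$. Reducing $a$ mod $p$ picks out the $\Z/p\Z$-layer $K(a_0)=L^{\ker\bar a}$, where $a_0^p-a_0=\alpha_0$.

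Next I would choose a nonzero $c\in k$ and let $N=K(b_0,b_1)$ be the $\Z/p^2\Z$-extension attached to the Witt vector $(\alpha_0,\alpha_1+cx)$, which has the \emph{same} first coordinate; realize it by a Harbater--Katz--Gabber $\Z/p^2\Z$-cover $Y\to\PP^1_x$ branched only at $\infty$. Since the first coordinate is unchanged we have $K(b_0)=K(a_0)$, and because $N$ is $p^2$-cyclic its only subextensions are $K$, $K(a_0)$ and $N$; thus $\hat L\cap\hat N=K(a_0)$ once $c$ is chosen so that $N\not\subseteq L$ (all but finitely many $c$). Setting $U$ to be the normalization of $X\times_{\PP^1_x}Y$, I would verify the dichotomy: globally $k(X)$ and $k(Y)$ are linearly disjoint over $k(x)$, for otherwise the $\Z/p\Z$-subextension of $k(Y)$ would sit inside $k(X)$ and yield an epimorphism $G\to\Z/p\Z$, against the hypothesis; hence $\Gal(k(U)/k(x))=G\times\Z/p^2\Z$. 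Locally, however, $\hat L\cap\hat N=K(a_0)$, so by the Proposition on decomposition groups of composita (\cite[Lemma~3.1]{wild.ram}) the inertia of $U\to\PP^1_x$ at $r'=(r,\infty_Y)$ is $\Gal(LN/K)=P\times_{\Z/p\Z}\Z/p^2\Z$, with the two structure maps to $\Z/p\Z$ being $\bar a$ and reduction, exactly as in the statement.

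To produce $Q$ and descend to it, I would normalize the Artin--Schrier--Witt roots so that $b_0=a_0$. A direct computation with the Witt operations (the correction terms vanish because $(0,cx)$ has zero first coordinate) gives $(\alpha_0,\alpha_1+cx)-_w(\alpha_0,\alpha_1)=(0,cx)$, whence $\gamma:=b_1-a_1\in LN$ satisfies $\gamma^p-\gamma=cx$. Then $K(\gamma)\subset LN$ is a $\Z/p\Z$-layer and $Q:=\ker\bigl(\Gal(LN/K)\to\Gal(K(\gamma)/K)\bigr)$ is an index-$p$ subgroup of $P\times_{\Z/p\Z}\Z/p^2\Z$. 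Let $V\to\PP^1_x$ be the $\Z/p\Z$-cover $Z^p-Z=cx$; as $cx$ has degree one, $V$ has genus $0$, so $V\cong\PP^1$. A second use of the no-epimorphism hypothesis shows $k(U)$ and $k(V)$ are linearly disjoint over $k(x)$: otherwise $k(x)(\gamma)$ would coincide with the $\Z/p\Z$-subextension $k(x)(b_0)$ of $k(Y)$, forcing $cx=c'\alpha_0+(y^p-y)$, which is impossible when $\deg\alpha_0>1$ and excluded for all but finitely many $c$ when $\deg\alpha_0=1$. Finally, with $W$ the normalization of $U\times_{\PP^1_x}V$, the map $W\to V$ is a $G\times\Z/p^2\Z$-cover branched only over $\infty_V$, and by \cite[Proposition~3.5]{wild.ram} its inertia at $(r',\infty_V)$ is $\Gal(LN/K(\gamma))=Q$. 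Since $V\cong\PP^1$, this exhibits $(G\times\Z/p^2\Z,Q)$ as realizable.

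The hard part is the local/global dichotomy of the second paragraph. I expect everything else to be bookkeeping: keeping the first Witt coordinate fixed is what \emph{forces} the local intersection to be $K(a_0)$ and hence the fibre product in the inertia, while the absence of a $\Z/p\Z$-quotient of $G$ is exactly what \emph{guarantees} global disjointness of $X$ from both $Y$ and $V$; the finitely many excluded values of $c$ cause no trouble since $k$ is infinite. The clean Witt identity $(\alpha_0,\alpha_1+cx)-_w(\alpha_0,\alpha_1)=(0,cx)$ is what makes $\gamma$ an Artin--Schrier element with a degree-one (genus-zero) defining equation, so that the concluding base change is again taken over a $\PP^1$.
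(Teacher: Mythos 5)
Your proposal is correct and follows essentially the same route as the paper's own proof: an auxiliary Harbater--Katz--Gabber $\Z/p^2\Z$-cover built from the Witt vector with the same first coordinate and second coordinate shifted by $cx$, the fibre-product description of the inertia $P\times_{\Z/p\Z}\Z/p^2\Z$ at the normalized fibre product $U$, the Witt-vector identity producing $\gamma$ with $\gamma^p-\gamma=cx$ and hence $Q=\Gal(LN/K(\gamma))$, and descent along the genus-zero cover $V$ via \cite[Proposition 3.5]{wild.ram}. In fact you are slightly more careful than the paper at two points: you require $N\not\subseteq L$ (rather than merely $N\ne L^{\ker a}$) to pin down the local intersection, and you explicitly handle the degenerate case $\deg\alpha_0=1$ when proving $k(U)$ and $k(V)$ linearly disjoint.
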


 \begin{proof}
  As in the previous proof, there exist a $G$-cover $X\to \PP^1$ branched only at $\infty$ and the inertia group at a point $r\in X$ above $\infty$ is the subgroup $P$. The Galois group of the field extension $L/K$ is $P$ where $L=\QF(\hat \cO_{X,r})$ and $K=\QF(\hat \cO_{\PP^1,\infty})=k((x^{-1}))$. Let $P_1=\ker(a)$. Then $L^{P_1}$ is an Artin-Schrier-Witt extension of $K$ corresponding to a reduced Witt vector of length two. Let $(\alpha_0,\alpha_1)\in W_2(L)$ be such that $(\alpha_0^p,\alpha_1^p)-(\alpha_0,\alpha_1)=(\beta_0,\beta_1)\in W_2(K)$ and $L^{P_1}=K(\alpha_0,\alpha_1)$. 
  Let $\beta_1'=\beta_1+cx$ for some nonzero $c\in k$ such that the Artin-Schrier-Witt extension of $K$ corresponding to the Witt vector $(\beta_0,\beta_1')$ is a $\Z/p^2\Z$ extension of $K$ different from $K(\alpha_0,\alpha_1)$. Let $\alpha_1'$ be such that $(\alpha_0^p,\alpha_1'^p)-(\alpha_0,\alpha_1')=(\beta_0,\beta_1')$ then $K(\alpha_0,\alpha_1)$ and $K(\alpha_0,\alpha_1')$ are linearly disjoint over $K(\alpha_0)$. Let $Y\to \PP^1$ be the Harbater-Katz-Gabber cover associated to the local fields extension $K(\alpha_0,\alpha_1')/K$. So $k(Y)K=K(\alpha_0,\alpha_1')$.
 
  Since there is no epimorphism from $G\to \Z/p\Z$, the extensions $k(Y)/k(x)$ and $k(X)/k(x)$ are linearly disjoint. To see this, let $M=k(X)\cap k(Y)$ and suppose $k(x)\subsetneq M$. Then $M/k(x)$ is either $p$-cyclic or $p^2$-cyclic. In either case, we get an epimorphism from $G\to \Z/p\Z$ contradicting the hypothesis.
 
  Letting $U$ to be the normalization of $X\times_{\PP^1_x} Y$ we note that $U$ is smooth and irreducible. The cover $U\to \PP^1_x$ is a $G\times \Z/p^2\Z$ cover branched only at $\infty$ and the inertia group and the ramification filtration at $r'=(r,\infty_Y)\in U$ is given by the extension of local fields $LK(\alpha_0,\alpha_1')/K$. Hence the inertia group is $I=P\times_{\Z/p\Z} \Z/p^2\Z$. By Proposition \ref{filtration.not.linearly.disjoint}(3) $-v_K(\beta_1-\beta_1')=1$ is an upper jump of $I$. Moreover, letting $\alpha=\alpha_1-\alpha'_1$, we note that $\alpha^p-\alpha=\beta_1-\beta'_1\in K$ and $\alpha\notin K$. Let $Q$ be the index $p$ subgroup of $I$ given by $\Gal(K(\alpha_0,\alpha_1,\alpha'_1)/K(\alpha))$. Note that $K(\alpha)/K$ is a $p$-cyclic extension with the lower jump at $1$. 
  So the Harbater-Katz-Gabber cover $V\to \PP^1_x$ associated to this extension has the property that $V$ is isomorphic to $\PP^1$. As in the above proof $k(U)$ and $k(V)$ are linearly disjoint over $k(x)$. Let $W$ be the normalization of $U\times_{\PP^1_x} V$ then $W$ is smooth and irreducible. Again we apply \cite[Proposition 3.5]{wild.ram} to conclude that $(G\times \Z/p^2\Z, Q)$ is realizable.
 \end{proof}

 \begin{cor}
  Suppose $(G,I)$ is realizable. Let $P$ be any $p$-subgroup containing $I_2$ and assume that $P$ has a $p^2$-cyclic quotient $a:P\to \Z/p^2\Z$. Also assume that there is no epimorphism from $G$ to $\Z/p\Z$. Then there exist an index $p$ subgroup $Q$ of $P\times_{\Z/p\Z} \Z/p^2\Z$ such that $(G\times \Z/p^2\Z, Q)$ is realizable.
 \end{cor}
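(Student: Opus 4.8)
The plan is to deduce this corollary by chaining together the two preceding results, Theorem \ref{AIC.known}(2) and Theorem \ref{p^2-cyclic.main}, in exactly the manner that the corollary following Theorem \ref{p-cyclic.main} was obtained from Theorem \ref{AIC.known} and Theorem \ref{p-cyclic.main}. The only content beyond invoking those two theorems is checking that their hypotheses line up with the single epimorphism assumption stated here.

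First I would reconcile the two forms of the non-epimorphism hypothesis. Theorem \ref{AIC.known}(2) asks that there be no epimorphism from $G$ onto \emph{any nontrivial quotient of $I_2$}, whereas the corollary only assumes there is no epimorphism $G\to \Z/p\Z$. The reconciling observation is that $I_2$, being a term of the lower ramification filtration in degree $\ge 1$, is a $p$-group; hence every nontrivial quotient of $I_2$ is a nontrivial $p$-group and therefore admits its own epimorphism onto $\Z/p\Z$. If $\psi\colon G\onto Q$ were an epimorphism onto some nontrivial quotient $Q$ of $I_2$, composing $\psi$ with an epimorphism $Q\onto \Z/p\Z$ would produce an epimorphism $G\to \Z/p\Z$, contradicting the assumption. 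Thus the hypothesis ``no epimorphism $G\to \Z/p\Z$'' already implies the hypothesis needed for Theorem \ref{AIC.known}(2).

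With this reconciliation in place the argument is two applications. Since $(G,I)$ is realizable and $P$ is a $p$-subgroup of $G$ containing $I_2$, Theorem \ref{AIC.known}(2) gives that $(G,P)$ is realizable. Now I would feed $(G,P)$ into Theorem \ref{p^2-cyclic.main}: its three hypotheses are met, namely $(G,P)$ is realizable by the previous step, there is no epimorphism $G\to \Z/p\Z$ by assumption, and $P$ admits the $p^2$-cyclic quotient $a\colon P\to \Z/p^2\Z$ by assumption. Its conclusion is precisely that there exists an index $p$ subgroup $Q$ of $P\times_{\Z/p\Z}\Z/p^2\Z$ with $(G\times \Z/p^2\Z, Q)$ realizable, which is the assertion. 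The proof therefore involves no obstacle beyond the elementary $p$-group observation above; the substantive work has all been done in Theorem \ref{AIC.known} and Theorem \ref{p^2-cyclic.main}.
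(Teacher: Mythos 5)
Your proposal is correct and is exactly the paper's intended argument: the paper leaves this corollary's proof implicit (the analogous earlier corollary is proved by the one-line chaining of Theorem \ref{AIC.known} with Theorem \ref{p-cyclic.main}, and the same chaining with Theorem \ref{p^2-cyclic.main} is meant here). Your extra observation that $I_2$ is a $p$-group, so any epimorphism onto a nontrivial quotient of $I_2$ would yield one onto $\Z/p\Z$, correctly fills in the hypothesis reconciliation that the paper glosses over.
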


\end{document}